\newtheorem{thm}{Theorem}[section]
\newaliascnt{lem}{thm}
\newtheorem{lem}[lem]{Lemma}
\newaliascnt{cor}{thm}
\newtheorem{cor}[cor]{Corollary}
\newaliascnt{prp}{thm}
\newtheorem{prp}[prp]{Proposition}
\newaliascnt{thmA}{thm}
\newtheorem*{thmA}{Theorem A}
\newaliascnt{thmB}{thm}
\newtheorem*{thmB}{Theorem B}
\theoremstyle{definition}
\newaliascnt{dfn}{thm}
\newtheorem{dfn}[dfn]{Definition}
\newtheorem*{qst}{Question}
\theoremstyle{remark}
\DeclareMathOperator{\dist}{dist}
\DeclareMathOperator{\A}{Alex}
\title[Toward canonical convex functions in Alexandrov spaces]{Toward canonical convex functions in Alexandrov spaces}
\author{Artem Nepechiy}
\address{Artem Nepechiy, Mathematisches Institut der Universit\"at zu K\"oln, Weyertal 86-90, 50931 K\"oln, Germany}
\email{artem.nepechiy@uni-koeln.de}
\begin{document}

\begin{abstract} 
	We construct for every finite-dimensional Alexandrov space $A$ and every point $p \in A$ a $2$-convex function $f_p$ in a small neighborhood around $p$, which approximates $\dist_p^2$ up to second order. Moreover, the function $f_p$ can be lifted to Gromov-Hausdorff close Alexandrov spaces of the same dimension.
\end{abstract}
\maketitle

\section{Introduction}
An Alexandrov space $A$ is a complete, geodesic metric space space satisfying a synthetic lower sectional curvature bound and having finite Hausdorff dimension. One can introduce
 $\lambda$-convex ($\lambda$-concave) functions, i.e. continuous maps $f:A \rightarrow \mathbb{R}$, such that
	$$ f \circ \gamma(t) - \frac{\lambda}{2}t^2 $$
is convex (concave) for every unit-speed shortest path $\gamma$. Although Alexandrov spaces admit a wide variety of concave functions, i.e  for every point $p \in A$ in a 
Alexandrov space the function
 $\dist_p^2$ ist $(2+O(r^2))$-concave on $B_r(p)$, convex functions are difficult to obtain.\par
The main result of this paper is to construct a map, which approximates $\dist_p^2$ up to second order and has convexity properties as in the Euclidean situation. One of the main results is:

\begin{thmA}\label{Theorem A}
	Let $A$ be a finite-dimensional Alexandrov space and $p \in A$ a point. Then there exist $r>0$ and a locally Lipschitz $2$-convex function $f:B_r(p) \rightarrow \mathbb{R}$ satisfying
		$$ \lim_{x \rightarrow p} \frac{f(x) - \dist^2_p(x)}{\dist^2_p(x)} = 0.  $$

	Moreover, the map $f$ is liftable to $GH$-nearby Alexandrov spaces of the same dimension in the sense of \autoref{Definition: Constructible}.
\end{thmA}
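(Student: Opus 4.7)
I would construct $f$ as a weighted sum
\[ f(x) = \sum_{i=1}^N w_i \bigl( \dist_{q_i}(x) - \dist_{q_i}(p) \bigr)^2, \]
where the $q_i$ form a well-spread family of reference points at a common distance $R$ from $p$, and the weights $w_i$ are chosen so that the tangential quadratic form $v \mapsto \sum_i w_i \cos^2 \angle(v, \uparrow_p^{q_i})$ matches $|v|^2$ on the tangent cone $T_p A$ up to second-order error. The intuition is that in Euclidean space this recipe reproduces $|x-p|^2$ exactly, and Toponogov comparison should let us carry the picture through at the cost of a small error term.

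\textbf{Second-order approximation.} The first variation of each $\dist_{q_i}$ at $p$ in direction $v \in \Sigma_p$ is $-\cos \angle(v, \uparrow_p^{q_i})$, so $f$ has zero gradient at $p$ and Hessian equal to $2 \cdot \mathrm{Id}_{T_p A}$ by the choice of weights, matching the formal Hessian of $\dist_p^2$. Combined with semi-concavity bounds on each $\dist_{q_i}$, this yields $f(x) = \dist_p^2(x)(1 + o(1))$ as $x \to p$.

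\textbf{$2$-convexity on $B_r(p)$.} Along a unit-speed shortest path $\gamma$, setting $a_i := \dist_{q_i}$, one obtains in a distributional sense
\[ (f \circ \gamma)''(t) = 2 \sum_i w_i (a_i'(t))^2 + 2 \sum_i w_i \bigl( a_i(\gamma(t)) - a_i(p) \bigr) a_i''(t). \]
The first term is bounded below by $2$ by the weight choice, with a small positive slack if we slightly over-sample. The second term is controlled by $|a_i - a_i(p)| \leq r$ (Lipschitz) and by the upper bound $a_i'' = O(1/R)$ coming from Toponogov comparison at scale $R$. Choosing $r \ll R$ absorbs the error and produces $(f \circ \gamma)''(t) \geq 2$.

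\textbf{Liftability to GH-nearby spaces.} Since $f$ is defined by an explicit formula in the distances to finitely many points, for any Alexandrov space $A'$ of the same dimension that is GH-close to $A$, choose lifts $q_i' \in A'$ of the $q_i$ via the GH-approximation, and define $f'$ on $A'$ by the same formula with the same weights. The stability of Toponogov comparison at fixed dimension propagates both the $2$-convexity and the second-order approximation to $A'$, realizing $f$ as constructible in the sense demanded.

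\textbf{Main obstacle.} The chief difficulty is achieving the sharp $2$-convexity (not $(2-\epsilon)$-convexity) simultaneously with the sharp matching $f/\dist_p^2 \to 1$. A finite strainer gives only an approximate match of the quadratic form, so a subtle balance is needed: either enrich the strainer to produce a definite positive excess in the first Hessian term and then renormalize, or pass to a continuous-parameter (averaged) construction and use a limiting argument. The analysis becomes especially delicate at singular points, where $\Sigma_p$ is not round and an $n$-point strainer may fail to span the space of directions tightly; keeping uniform control of this through the lift to GH-close spaces is where the technical heart of the proof likely lies.
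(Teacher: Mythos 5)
Your construction is, at best, the paper's base case $P(n,n)$ for a \emph{regular} point ($T_pA \cong \mathbb{R}^n$), and even there it falls short of the sharp constant. Two genuine gaps:

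\textbf{Singular points.} You correctly flag this in your ``main obstacle'' paragraph, but the remedies you suggest (enrich the strainer; average continuously) do not address the actual obstruction: a singular point $p$ need not admit an $(n,\delta)$-strainer at all, and no finite or continuous family of reference points $q_i$ can make the quadratic form $v \mapsto \sum_i w_i \cos^2\angle(v,\uparrow_p^{q_i})$ uniformly close to $1$ on a space of directions $\Sigma_p$ that is not round. The paper circumvents this by a backward induction $P(n,n)\Rightarrow\cdots\Rightarrow P(n,0)$ on the number $k$ of $\mathbb{R}$-factors split off by $T_pA=\mathbb{R}^k\times T_bB$. Passing from $P(n,k+1)$ to $P(n,k)$ is not a strainer argument but an entirely different mechanism: one builds a convex region $C_r$ Hausdorff-close to $B_r(p)$ by gluing local functions $F_q$ of the form $C+D_q+\tfrac{1-\varepsilon}{2}f_q$, where $D_q$ is a weighted sum of Busemann functions mimicking the gradient and $f_q$ is the second-order approximation coming from the induction hypothesis (or, at points in the $\mathbb{R}^k$-factor, a ``weak'' approximation built from Kapovitch's construction). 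One then takes $\dist_{\partial C_r}$ and uses a sharpened version of the concavity-of-distance-to-boundary theorem (\autoref{Prop: concavity estimates}) to extract the desired $(-2+\varepsilon)$-concave function. None of this is present in your sketch, and it is the technical heart of the result.

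\textbf{Exact $2$-convexity.} Even at a regular point, a single-scale construction gives only $(-2+\varepsilon)$-concavity with $\varepsilon = \varepsilon(r)>0$ fixed on the ball $B_r(p)$: the defect comes from the explosion estimate \autoref{Lemma:BGP}, where the error depends on the strainer quality $\delta$ at scale $r$, and this error does \emph{not} vanish by adding more reference points --- it vanishes only as $r\to 0$. Your proposed ``over-sampling with positive slack'' therefore cannot work. The paper removes the $\varepsilon$ by the multi-scale self-improvement procedure (\autoref{Le: Reparametrization fucntion}, \autoref{Cor:Gluing of functions}): one produces $(-2+\varepsilon_i)$-concave functions $f_i$ on nested balls $B_{R/2^{i-1}}(p)$ with $\varepsilon_i\to 0$, then glues them by $F_{i+1}=\min\{\varphi_{\varepsilon'}\circ f_i, f_{i+1}\}$ for a reparametrization $\varphi_{\varepsilon'}$ engineered so that the minimum is globally well-defined, concave, and agrees with $f_i$ near the outer boundary and $f_{i+1}$ near the center. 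Without some analogue of this gluing step you cannot pass from $(-2+\varepsilon)$-concave to the sharp $(-2)$-concave bound demanded by Theorem A.

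Your second-order claim also needs more care: you assert that the weight choice makes the Hessian at $p$ equal to $2\cdot\mathrm{Id}$, but at an arbitrary point $\dist_p^2$ need not be twice differentiable, and the correct formulation is the asymptotic $f(x)=\dist_p^2(x)(1+o(1))$ as in the theorem's statement. This is fine, but you should not conflate matching formal Hessians with the required pointwise estimate $f\geq -\dist_p^2$ (condition (iii) in the paper's Theorem B), which is what drives the construction of $C_r$ and the self-improvement.
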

Theorem A provides an affirmative answer to a question asked in \cite{MR2408266}[Question 7.3.6].
	\begin{qst}[\cite{MR2408266}]
		Is it true that for any $p \in A$ and any $\varepsilon>0$, there is a $(-2+\varepsilon)$-concave function $f_p$ defined in a neighborhood of $p$, such that $f_p(p)=0$ and $f_p \geq - \dist_p^2$?
	\end{qst}
In order to prove theorem A, we are going to show the following result first:
\begin{thmB}\label{Theorem B}
	Let $A$ be a finite-dimensional Alexandrov space and $p \in A$ a point. Then for any $\varepsilon > 0$ there exist an $r>0$ and a map $f_{\varepsilon}:B_r(p) \rightarrow \mathbb{R}$ satisfying the following conditions:
		\begin{enumerate}
			\item \label{Main Thm:Property1} The function $f_{\varepsilon}$ is $(-2 + \varepsilon)$-concave and Lipschitz continuous on $B_r(p)$.
			\item \label{Main Thm:Property2} The function $f_{\varepsilon}$ has an isolated maximum at $p$ and satisfies $f_{\varepsilon}(p)=0$.
			\item \label{Main Thm:Property3} For all $x \in B_r(p)$ one has $f_{\varepsilon}(x) \geq -\operatorname{dist}_p^2(x).$
	\end{enumerate}
\end{thmB}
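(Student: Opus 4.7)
The plan is to construct $f_\varepsilon$ as a pointwise minimum of finitely many quadratic-in-distance functions anchored at carefully chosen points near $p$. In Euclidean space, $-\dist_p^2$ itself is $(-2)$-concave and satisfies all three required properties; but in a general Alexandrov space only the upper bound $(\dist_p^2)''\le 2+O(r^2)$ is available from Toponogov comparison, which makes $-\dist_p^2$ semi-convex but not semi-concave, so a direct choice $f_\varepsilon = -\dist_p^2$ fails. The idea is to approximate $-\dist_p^2$ from above by a discrete minimum over ``ring models'' built from distances to a net of auxiliary points.

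Choice of data: given $\varepsilon>0$, pick a small $L>0$ and, using the compactness of $\Sigma_p$, choose a $\delta$-net $\{v_1,\dots,v_N\}\subset\Sigma_p$ with $\delta=\delta(\varepsilon)$ to be determined. For each $i$, using the existence of geodesics in $A$, pick a point $q_i\in A$ with $\dist_p(q_i)=L$ and $\uparrow_p^{q_i}$ at distance at most $\delta$ from $v_i$. Define
\[
	h_i(x) := -\bigl(\dist_{q_i}(x)-L\bigr)^2, \qquad f_\varepsilon(x) := \min_{1\le i\le N} h_i(x)
\]
on a small ball $B_r(p)$, with $r$ to be chosen in terms of $\varepsilon,\delta,L$.

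Properties \emph{(ii)} and \emph{(iii)} follow directly from this setup. Since $\dist_{q_i}(p)=L$, each $h_i(p)=0$, hence $f_\varepsilon(p)=0$; the triangle inequality $|\dist_{q_i}(x)-L|\le \dist_p(x)$ gives $h_i(x)\ge -\dist_p^2(x)$ for every $i$, preserved by the minimum, so $f_\varepsilon\ge-\dist_p^2$. For the isolated maximum, if $x\ne p$ then the direction $\uparrow_p^x$ is $\delta$-close to some $v_i$; the first variation of $\dist_{q_i}$ along the geodesic from $p$ to $x$ then forces $\dist_{q_i}(x)\ne L$, so $h_i(x)<0$ and hence $f_\varepsilon(x)<0$. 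Lipschitz continuity is inherited from the individual $h_i$.

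The essential difficulty is property \emph{(i)}, the uniform $(-2+\varepsilon)$-concavity on all of $B_r(p)$. At $p$ itself, a direct quadratic expansion yields $h_i(\gamma_v(t))=-\cos^2\angle(v,v_i)\,t^2+O(t^3)$ along a geodesic $\gamma_v$ from $p$ in direction $v$, and taking the minimum over the net gives $f_\varepsilon(\gamma_v(t))\le -(1-\delta^2)\,t^2+O(t^3)$, which is the desired second-order behavior once $\delta$ is chosen so that $2\delta^2\le \varepsilon/2$. The main obstacle is extending this estimate to arbitrary unit-speed geodesics in $B_r(p)$, because each individual $h_i$ fails to be $(-2+\varepsilon)$-concave globally: the second variation of $h_i$ picks up a positive tangential component of order $\dist_p(x)/L$ on the region $\{\dist_{q_i}<L\}$, so the general ``minimum of $\lambda$-concave is $\lambda$-concave'' principle does not apply directly. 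I expect this to be handled by combining the Alexandrov semi-concavity bound $(\dist_{q_i}^2)''\le 2+O(r^2)$ with a careful tracking of which index realizes the minimum at each $x$ in each direction, probably augmented by a symmetrization over approximately antipodal pairs $v_i,v_{i'}$ designed to cancel the troublesome tangential contributions. Once this uniform semi-concavity is in place, shrinking $r$ sufficiently in terms of $\varepsilon,\delta,L$ and the local geometry of $A$ at $p$ completes the proof.
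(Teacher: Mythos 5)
Your construction fails property (i), and the failure is not a technical obstacle to be cleaned up later — it is fatal to the ``minimum over a net'' strategy. Here is a concrete computation in $\mathbb{R}^2$ that already kills it. Take $v_1=e_1$, $v_{-1}=-e_1$, so $q_1=Le_1$, $q_{-1}=-Le_1$, and look at the point $x=te_1$ for $0<t\ll L$. Along the tangential line $\gamma(s)=te_1+se_2$ one finds
\[
h_1(\gamma(s)) = -t^2+\frac{ts^2}{L-t}+O(s^4),\qquad
h_{-1}(\gamma(s)) = -t^2-\frac{ts^2}{L+t}+O(s^4),
\]
so both functions equal $-t^2$ at $s=0$ with vanishing first derivative; the minimum therefore equals $h_{-1}$ for $s\neq 0$ near $0$ and has second derivative $-\frac{2t}{L+t}$ at $s=0$. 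For $t<r\ll L$ this is close to $0$, nowhere near $-2+\varepsilon$. No other $h_j$ from the $\delta$-net can save this: for $v_j$ at angle $\alpha\geq\delta$ from $e_1$ the value $h_j(x)\approx -t^2\cos^2\alpha$ strictly exceeds $-t^2$, so $h_{-1}$ is the \emph{unique} minimizer on a whole neighbourhood of $x$, and $\min_i h_i$ coincides with $h_{-1}$ there. The ``minimum of $\lambda$-concave is $\lambda$-concave'' principle only propagates a bound you already have on each piece; it never generates new concavity in the interior of a region where a single $h_i$ is the strict minimizer, and that is exactly what happens here. The antipodal symmetrization you propose is already built into this example and does not help.

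The paper avoids this precisely by using a \emph{sum} over a \emph{strainer} rather than a minimum over a net. In the regular case $P(n,n)$ the model function is $\mu_R(x)=\sum_{i\ne 0}-\tfrac12\bigl[(1+R)-\dist_{p_i}(x)\bigr]^2+\tfrac{R^2}{2}$ over an $(n,0)$-strainer $\{p_i\}$, and the concavity estimate for the lifts hinges on the strainer identity $\sum_i\cos^2\bigl(\vert\uparrow\Uparrow_m^{p_i}\vert\bigr)\approx 1$ from \autoref{Lemma:BGP}: the bad tangential contributions of individual terms sum to something uniformly controlled by $R$, while the radial contributions sum to roughly $-2$ in every direction. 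A minimum cannot exploit this cancellation because only one index is active at a time. Beyond this, your proposal addresses only the regular case $T_pA=\mathbb{R}^n$; the bulk of the paper's argument is the backward induction $P(n,n)\Rightarrow\dots\Rightarrow P(n,0)\Rightarrow T(n)$, which handles singular $p$ by constructing liftable convex sets $C_r$ (via Busemann functions and second-order approximations adapted to a splitting $T_pA=\mathbb{R}^k\times T_bB$), proving the concavity estimate \autoref{Prop: concavity estimates} for $\dist_{\partial C_r}$, and then running the self-improvement gluing of \autoref{subsec:Improvement}. None of that is visible from the net construction, and without it Theorem B is not established at singular points even if the concavity issue above were repaired.
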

	Although theorem B looks like a corollary of theorem A, it is the other way around. By refining some arguments in the proof of theorem B, one obtains the stronger theorem A.
	Strictly concave functions, meaning $\lambda$-concave functions with $\lambda<0$, have been constructed before in \cite{MR1220498}. Perelman introduced a construction, which could produce liftable functions satisfying (\ref{Main Thm:Property1}) in theorem B. More precisely, let $(A_i^n,p_i)$ be a pointed sequence of $n$-dimensional Alexandrov spaces of curvature $\geq \kappa$ converging in the pointed Gromov-Hausdorff sense to an Alexandrov space $(A^n,p)$ of the same dimension. Then for any $\lambda>0$ there exits $r>0$ such that one has a $(-\lambda)$-concave, $1$-Lipschitz map $f:B_r(p) \rightarrow \mathbb{R}$. Moreover, there exists $N \in \mathbb{N}$, such that for all $i \geq N$ one has $(-\lambda)$-concave, $1$-Lipschitz maps $f_i:B_r(p_i) \rightarrow \mathbb{R}$, which are uniformly close to $f$. By taking the minimum over such functions Kapovitch improved this construction in \cite{MR1904560} to additionally satisfy (\ref{Main Thm:Property2}) in theorem B. However, maps obtained that way might fail to satisfy condition (\ref{Main Thm:Property3}). \par
	Now turn to the strategy for proving theorem B. For every point $p$ in a finite-dimensional Alexandrov space $A$, denote by $o_p$ the apex of the tangent cone $T_pA$, then the function $-\dist_{o_p}^2$ is $(-2)$-concave, since $T_pA$ is an Euclidean cone. Thus it satisfies the conclusions of theorem B. One would like to lift the function from the tangent space to a small neighborhood around $p$, such that all properties, which are promised by theorem $B$, are preserved.\par
%
	In order to define the function $-\dist^2_{o_p}$ on $T_pA$ instead of looking at distances from $o_p$ one can equivalently look at distances to the unit sphere $S_1(o_p)$ around $o_p$. In other words $-\dist^2_{o_p}$ can be defined by knowing the distance to the boundary of the convex set $B_1(o_p)$, the ball of radius $1$ around $o_p$. \par
	Therefore instead of lifting the point $o_p$ one needs to lift the convex set $B_1(o_p) \subset T_pA$ to a convex set close to $B_r(p) \subset A$ with regard to the Hausdorff-distance for very small $r>0$. Notice that in general it is an open question how to lift convex sets to Gromov-Hausdorff close Alexandrov spaces.
	\begin{qst}[\cite{MR2408266}(Question 9.1.3')]
		Assume $A_i \xrightarrow{d_{GH}} A, A_i \in  \A^n(\kappa)$, $\dim(A)=n$ (i.e there is no collapse) and $\partial A= \emptyset$. Let $S \subset A$ be a convex hypersurface. Is it always possible to find a sequence of convex hypersurfaces $S_i \subset A_i$ which converges to $S$?
	\end{qst}
	There is hope that the techniques of our result can be used to answer this question. The special case $(\frac{1}{r}A,p) \rightarrow (T_pA,o_p)$ and $S= S_1(o_p)$ will be solved later in this paper. Suppose one has convex sets $C_r \subset A$ satisfying $d_H(B_r(p),C_r)/r \rightarrow 0$ for $r \rightarrow 0$, where $d_H$ denotes the Hausdorff distance. Assume additionally that the boundary $\partial C_r$ comes from a $(-1 +\varepsilon(r))$-concave function. Then one can define a map by
		$$ f_r: B_r(p) \setminus \overline{B}_{\frac{r}{4}}(p) \rightarrow \mathbb{R}; x \mapsto \varphi \circ \dist_{\partial C_r}(x); \quad \varphi: \mathbb{R} \rightarrow \mathbb{R}; t \mapsto -(t - C)^2, $$
	where $C$ denotes an appropriately chosen constant. The fact, that $\partial C_r$ comes from a $(-1 + \varepsilon(r))$-concave function will imply $(-2 +\varepsilon(r))$-concavity of $f_r$. Indeed it is well known, that for an Alexandrov space $A$ with curvature $\geq 0$ and non-empty Alexandrov boundary $\partial A$ the function $\dist_{\partial A}$ is concave \cite{MR2408266}[Thm. 3.3.1]. This statement and in particular its proof can be generalized to obtain a sharper bound on the concavity of $\dist_{\partial A}$, given that the boundary $\partial A$ comes from a strictly concave function (compare \autoref{Prop: concavity estimates}).\par
	Similar statements have been proven in \cite{MR2595678} for spaces having upper curvature bounds as well as spaces having bounded curvature from below. We provide an alternative shorter proof in the case of a lower curvature bound.
	\par
	Since $C_r$ is Hausdorff close to $B_r(p)$, the map $f_r$ will satisfy the following inequalities
		\begin{equation}\label{Eq: Schranken} - \dist_p^2(x) \leq f_r(x) \leq -(1-\varepsilon(r)) \dist_p^2(x) \text{ for } x \in B_r(p) \setminus B_{\frac{r}{4}}(p).\end{equation}
	Assume one has two $(-2+\varepsilon(r))$-concave maps
		$$f_1 : B_r(p)  \rightarrow \mathbb{R}, \quad  f_2 : B_{\frac{r}{2}}(p)  \rightarrow \mathbb{R},$$
	coming from the above construction and satisfying inequalities from (\ref{Eq: Schranken}). Consider $F_1:=\min \lbrace \varphi \circ f_1,f_2 \rbrace$ whenever the minimum is defined, if $\varphi: \mathbb{R}\rightarrow \mathbb{R}$ is chosen properly, one can achieve $F_1= f_1$ on $B_r(p) \setminus \overline{B}_{r (1-\varepsilon)}(p)$ and $F_1 = f_2$ on $B_{\frac{r}{4}}(p)$. An inductive argument gives the function promised by theorem B. This is, what is called \emph{self-improvement} of the function.\par
	The bread and butter of the construction in the proof of theorem B is, that it can be lifted to nearby Alexandrov spaces. This means, if $(A_i,p) \rightarrow (A,p)$ in $\operatorname{Alex}(\kappa)$ without collapse and $f:B_r(p)  \subset A \rightarrow \mathbb{R}$ is a function then there exist $f_i:B_r(p_i) \subset A_i \rightarrow \mathbb{R}$ with similar properties. In the upcoming definition it will be made precise, which properties the lifts should preserve.
	\begin{dfn}[$\varepsilon$-Constructible]\label{Definition: Constructible}
		Fix $\varepsilon,R>0$ and let $p \in A^n \in \operatorname{Alex}^n(\kappa)$ be a point in an $n$-dimensional Alexandrov space. Assume $f_{\varepsilon}:B_R(p)\rightarrow \mathbb{R}$ is a function satisfying the conclusions of theorem B for $\varepsilon>0$.\par
		The map $f_{\varepsilon}$ is called $\varepsilon$-constructible if for any sequence $A^n_i \in \operatorname{Alex}^n(\kappa)$ with $(A_i^n,p_i)\rightarrow (A^n,p)$, there exists $N \in \mathbb{N}$ such that for all $i \geq N$ there is a function $f_i:B_R(p_i) \rightarrow \mathbb{R}$, which satisfies the following conditions:

		\begin{enumerate}
			\item The function $f_i$ is $2R$-Lipschitz and $(-2+\varepsilon)$-concave on $B_R(p_i)$.
			\item The function $f_i$ satisfies $f_i(p_i)=0$ and
				$$ -\dist_{p_i}^2(x) \leq f_i(x) \leq -(1-2\varepsilon) \dist^2_{p_i} (x)  \text{ for all } x \in B_R(p_i) \setminus B_{\aleph(i) \cdot R}(p_i), $$
			where $\aleph(i)$ denotes a sequence satisfying $\aleph(i) \rightarrow 0$ for $i \rightarrow \infty$.
		\end{enumerate}
	\end{dfn}
	It remains to explain, how the convex sets $C_r$, mentioned above, are obtained. With the above definition for given $n \in \mathbb{N}$ one can prove a series of propositions $P(n,k)$ for $1 \leq k \leq n$, which are crucial for the construction of $C_r$.
	\begin{prp}[$P(n,k)$]\label{Main Proposition}\label{Proposition}
		Let $A$ be an Alexandrov space of dimension $n$ without boundary and $p \in A$. Denote by $T_pA$ the tangent space at $p$. Fix arbitrary $\varepsilon>0$ and $0 \leq k \leq n$. \par
		If $T_pA=\mathbb{R}^k \times T_bB$ for some point $b$ in an Alexandrov space $B$ of dimension $n-k$ without boundary, then there exists $R>0$ and  a function $f:B_R(p) \subset A \rightarrow \mathbb{R}$ satisfying:
			\begin{enumerate}
				\item  The function $f$ is $2R$-Lipschitz and $(-2 + \varepsilon)$-concave on $B_R(p)$.
				\item The function $f$ has an isolated maximum at $p$ and satisfies $f(p)=0$.
				\item For all $x \in B_R(p)$ one has $f(x) \geq -\operatorname{dist}_{p}^2(x).$
				\item The function $f$ is $\varepsilon$-constructible.
			\end{enumerate}
		Denote the statement of the Proposition for fixed $n$ and $k$ by $P(n,k)$.
	\end{prp}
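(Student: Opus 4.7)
The plan is to argue by induction on $n$, with an inner descending induction on $k$. The base case is $k=n$: then $T_pA=\mathbb{R}^n$, so $p$ is a Euclidean regular point. Choosing $r$ small, the rescaled ball $\tfrac{1}{r}B_r(p)$ is Gromov--Hausdorff close to a Euclidean ball of radius $1$, so by the standard Toponogov comparison the function $-\dist_p^2$ is already $(-2+\varepsilon)$-concave on $B_r(p)$, has an isolated maximum at $p$, and trivially dominates $-\dist_p^2$. Its $\varepsilon$-constructibility comes for free, since for any sequence $(A_i,p_i)\to(A,p)$ in $\operatorname{Alex}^n(\kappa)$ without collapse the rescaled balls at $p_i$ are eventually equally close to a Euclidean ball, so $-\dist_{p_i}^2$ furnishes the required lift.

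For the inductive step with $k<n$ and $T_pA=\mathbb{R}^k\times T_bB$, the Perelman--Kapovitch splitting and stability machinery provides $k$ almost-Busemann functions $h_1,\dots,h_k\colon B_r(p)\to\mathbb{R}$ whose common zero set is a ``fiber'' that, after rescaling, is Hausdorff-close at scale $r$ to a small ball in $B$. By the outer induction hypothesis, applied to $B$ (of dimension $n-k<n$) at a suitably chosen nearby point with a nontrivial Euclidean factor in its tangent cone (such points being dense, and a small perturbation not affecting the conclusion at $p$), we obtain a function $f_B$ on $B$ near $b$ satisfying the conclusions of $P(n-k,k')$ for some $k'\ge 1$. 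Pulling $f_B$ back through the almost-product structure and combining it with $-\sum_{j}h_j^2$ from the flat directions yields, on the annulus $B_r(p)\setminus B_{r/4}(p)$, a $(-2+\varepsilon(r))$-concave function satisfying the sandwich bound \eqref{Eq: Schranken}. Equivalently, this is the function $\varphi\circ\dist_{\partial C_r}$ for the convex set $C_r$ obtained as the intersection of a pulled-back sublevel set of $f_B$ with the Euclidean ball in the $h_j$-directions; the strict concavity of $\partial C_r$ descends to concavity of $\dist_{\partial C_r}$ via \autoref{Prop: concavity estimates}. Iterating the self-improvement scheme from the introduction on balls of geometrically decreasing radii then produces the final function $f\colon B_R(p)\to\mathbb{R}$ with an isolated maximum at $p$ satisfying all four conclusions, and $\varepsilon$-constructibility is inherited because each ingredient is itself constructible and the min-and-iterate operation commutes with taking uniform limits along $(A_i,p_i)\to(A,p)$.

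The hardest part is quantitative. The concavity losses coming from (a) the splitting error in the $h_j$, (b) the Hausdorff error in identifying the fibers with balls of $B$, and (c) the minimum-taking in the self-improvement step must all be absorbed into a single target constant $\varepsilon$, and this budget has to be preserved uniformly along any GH-approximating sequence $(A_i,p_i)\to(A,p)$. The formulation of $\varepsilon$-constructibility in \autoref{Definition: Constructible} is precisely tuned so that the class of admissible functions is closed under these three operations, and verifying this closure — in particular, that the lift of $f_B$ along the fibers keeps its concavity loss within $\varepsilon/2$ uniformly along GH-convergent sequences — is where the bulk of the work will lie.
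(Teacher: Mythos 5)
Your proposal contains two genuine gaps, the first of which is fatal to the base case and the second of which sidesteps precisely the difficulty the paper is organized around.

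\textbf{The base case is wrong.} You assert that when $T_pA=\mathbb{R}^n$, the function $-\dist_p^2$ is already $(-2+\varepsilon)$-concave on a small ball because the ball is Gromov--Hausdorff close to a Euclidean ball. This is false, and if it were true the whole paper would be trivial: the question of Petrunin quoted in the introduction asks precisely whether such a function exists. In a space of curvature $\ge 0$, $\dist_p^2$ is $(2+O(r^2))$-\emph{concave}, but $(-2+\varepsilon)$-concavity of $-\dist_p^2$ amounts to an \emph{upper} bound $(-\dist_p^2)''\le -2+\varepsilon$, i.e.\ a lower bound $(\dist_p^2)''\ge 2-\varepsilon$, which fails. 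Even if $p$ itself is regular, arbitrarily close to $p$ there may be singular points (say cone points of angle just below $2\pi$) along a geodesic $\gamma$; along such $\gamma$ the restriction of $\dist_p$ is concave at the singular point, so $\dist_p^2\circ\gamma$ can have second derivative far above $2$. GH-closeness to a Euclidean ball controls the metric, not second-order quantities of this kind. The paper's base case (Section~\ref{sec: reg case}) instead constructs the model function $\mu_R=\sum_{i}\varphi_\varepsilon\circ\dist_{q_i}$ from an $(n,0)$-strainer in $T_pA$ and proves concavity of its canonical lifts via the explosion estimate of \autoref{Lemma:BGP}; this is a real piece of work, not a tautology, and the same applies to $\varepsilon$-constructibility of the lifts.

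\textbf{The inductive step dodges the actual obstruction.} The paper's induction is a \emph{descending induction in $k$ for fixed $n$} ($P(n,n)\Rightarrow\cdots\Rightarrow P(n,0)\Rightarrow T(n)$); there is no outer induction on $n$ and no application of the Proposition in dimension $n-k$. More importantly, your plan to apply the induction hypothesis in $B$ ``at a suitably chosen nearby point with a nontrivial Euclidean factor'' and dismiss the discrepancy as ``a small perturbation not affecting the conclusion at $p$'' does not work: the conclusions (isolated maximum at $p$, lower bound by $-\dist_p^2$) are anchored at $p$, and the whole difficulty of the step $P(n,k+1)\Rightarrow P(n,k)$ is the presence of points $q\in\mathbb{R}^k\times\{o_b\}\subset T_pA$, lying on the unit sphere $S_1(o_p)$, where $T_qT_pA$ does \emph{not} split off $\mathbb{R}^{k+1}$. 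The paper's solution (Section~\ref{subseq: second order with factor}) is to replace $-\dist_{o_p}^2$ by the modified target $f_c(x,y)=-\tfrac12(\|x\|^2+(|o_by|+c)^2)$ and to approximate it near such $q$ with the \emph{weak} second-order approximation $H=H_1+H_2$, where $H_2$ is the Kapovitch cone-factor term of \cite{MR1904560}; this weaker building block satisfies a one-sided bound good enough to glue (\autoref{Le: Weak second order approximation with lift}), but it is not itself something $P(n,k+1)$ could supply. Your fibration argument with almost-Busemann functions $h_j$ also leaves the cross-terms in $-\sum_j h_j^2 +$ (pulled-back $f_B$) uncontrolled; in the paper no fibration is used — instead, the level set of the pseudo-constructible function $F$ bounds a convex region $C_\varepsilon$, the sharpened concavity of $\dist_{\partial C_\varepsilon}$ is established by the comparison argument in \autoref{Prop: concavity estimates}, and only then is the self-improvement scheme applied.

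The self-improvement scheme you quote is used in the paper in essentially the way you describe, but it enters only after the genuinely hard constructions above have been carried out, and those are missing from your proposal.
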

		Denote the statement of theorem B for dimension $n$ by $T(n)$. The assertion of $T(n)$ will be proven by backward induction using $P(n,k)$, where the induction scheme is given by
			$$ P(n,n) \Rightarrow P(n,n-1) \Rightarrow \ldots \Rightarrow P(n,1) \Rightarrow P(n,0) \Rightarrow T(n).$$
		Let us illustrate the idea in dimension $2$. The Statement $P(2,2)$ means that we start with a regular point. Consider an $(n,0)$-strainer $\lbrace(p_j,q_j)\rbrace$ in $T_pA= \mathbb{R}^n$ and define the map $ f:= \sum \varphi  \circ \dist_{p_j}$ for a carefully chosen function $\varphi: \mathbb{R} \rightarrow \mathbb{R}$. It will be shown, that this map is $\varepsilon$-constructible and the lifts are obtained in the obvious way, i.e. $f_i=  \sum \varphi  \circ \dist_{p_{ij}}$, where $p_{ij}$ denotes the canonical lift of $p_j$ using Hausdorff approximations. Applying the self-improvement argument yields $P(n,n)$.\par
		If $A$ is an two-dimensional Alexandrov space without boundary the statements $P(2,2)$ and $P(2,1)$ coincide. It remains to prove the implication $P(2,1) \Rightarrow P(2,0)$. For a point $q \in T_pA$ lying on the unit sphere one locally has
			\begin{equation}
				- \dist^2_{o_p}= 1 +2 B_{\gamma}(x) - \dist_q^2,
			\end{equation}
		where $\gamma$ is the ray starting at the origin and going through $q$ and $B_{\gamma}$ denotes the Busemann function associated to the ray $\gamma$. Since a shortest path, namely the ray $\gamma$ goes through $q$, by the splitting theorem \cite{MR0256327} the tangent space $T_qT_pA$ splits of an additional $\mathbb{R}$-factor, thus $P(2,1)$ is applicable. Now the right hand side of this equation can be approximated by a distance function from a point lying on $\gamma$ and being sufficiently far away from the origin and the function coming from $P(2,1)$. \par
		Using this argument one can produce for every $q \in B_1(o_p) \setminus B_{1/2}(o_p)$ a  $(-2+\varepsilon)$-concave function $F_i:B_{r_q}(q) \rightarrow \mathbb{R}$ such that
			\begin{equation}\label{Equation: Well defined Condition}
				F_i(x) < \dist^2_{o_p}(x) \text{ for all } x \in B_{r_q/10}(q) \text{ and } 
				F_i(x) > \dist^2_{o_p}(x) \text{ for all } x \in B_{r_q/2}(q).
			\end{equation}
		Using compactness of $ B_1(o_p) \setminus B_{1/2}(o_p)$ one obtains a finite covering $\lbrace B_{r_i}(q_i) \rbrace_{i=1}^N$, such that $\lbrace B_{r_i/20}(q_i) \rbrace_{i=1}^N$ is still a covering together with functions $F_i:B_{r_i}(q_i) \rightarrow \mathbb{R}$ as above. Now the conditions in (\ref{Equation: Well defined Condition}) imply that
			$$ F:= \min_{1 \leq i \leq N } F_{i}(x) \cdot \mathbbm{1}_i(x),$$
		where $\mathbbm{1}_i$ denotes the characteristic function for $B_{r_i}(q_i)$, is a well-defined, $(-2+\varepsilon)$-concave, Lipschitz function on all of $ B_1(o_p) \setminus B_{1/2}(o_p)$. Finally the level set of $\lbrace F=1 \rbrace$ bounds a convex set in $T_pA$.\par
		The arguments are designed in a way that they can be repeated verbatim for all sufficiently close lifts of the $F_i$. In particular there is a way to lift the function $F$ to a small annulus around $p$ in $A$. The level sets of the lifts then bound a convex region $C_r$ in $A$. In view of the remarks made earlier, this finishes the proof of $P(2,0)$.\par
		In dimensions higher than $2$, one needs a more sophisticated argument. Let us illustrate this in the situation $n=3$. $P(3,3)$ and $P(3,2)$ can be proved the same way as before. The statement $P(3,1)$ includes tangent spaces like $\mathbb{R} \times C^2_{\alpha}$, where $C^2_{\alpha}$ denotes a two-dimensional cone with opening angle $\alpha$. The issue is that one cannot use $P(3,2)$ at points $\mathbb{R} \times \lbrace o \rbrace$, where $o$ denotes the apex of $C^2_{\alpha}$, since at such points the tangent space $T_q T_pA$ does not split an additional $\mathbb{R}$-factor as before.\par
		The solution is to no longer approximate $-\dist_{o_p}^2: \mathbb{R}^k \times T_bB \rightarrow \mathbb{R}$ but to introduce a new function
			$$f_c: \mathbb{R}^k \times T_bB \rightarrow \mathbb{R}; (x,y) \mapsto  - \frac{1}{2} \cdot \left(  \Vert x  \Vert^2 + \left( \vert o_{b} y \vert_{B} +c \right)^2  \right) $$
		for sufficiently small $c>0$. The advantage of that function is, that it can be approximated at points $\mathbb{R}^k \times \lbrace o \rbrace$ using weaker functions than the ones coming from $P(n,k+1)$. This weak map is the sum of $H_1 : \mathbb{R}^k \rightarrow \mathbb{R}, H_2: T_bB \rightarrow \mathbb{R}$, where $H_1$ is constructed similar to the step $P(k,k)$ and $H_2$ is the map used by Kapovitch in \cite{MR1904560}. Since both maps are defined in terms of distance functions they can naturally be extended to $\mathbb{R}^k \times T_bB$. It remains to justify that the sum is $\varepsilon$-constructible, which will be carried out in \autoref{subseq: second order with factor}. This finishes the proof of theorem B and thus of theorem A, when the Alexandrov space has no boundary. The case with boundary can be immediately deduced from this (compare \autoref{Cor: Boundary case}).\par
		This paper is a condensed version of the authors thesis \cite{Nepechiy2018}, where the arguments can be found in full detail.\par
		\textbf{Acknowledgements.} I  would like to thank my advisor Burkhard Wilking for this guidance and support during the PhD thesis and Alexander Lytchak for useful comments regarding this paper.\par
		The author was partially supported by the DFG grant SPP 2026.
\section{Preliminaries}
	We assume familiarity with Alexandrov spaces, in particular with \cite{MR1835418},\cite{MR1185284},\cite{MR1886682},\cite{MR1320267}. In this section we will fix notation and collect some less known facts and definitions, which will be needed later on. \par
	By $A^n$ we usually denote an Alexandrov space of dimension $n$, that is a metric space satisfying the Toponogov triangle comparison. For $x,y \in A$ the distance between $x$ and $y$ is denoted by $\vert xy \vert$. If $C,D \subset A$ then $\vert C D \vert$ is the infimum over $\vert c d \vert$, where $c \in C$ and $d \in D$. For $\lambda>0$ by $\lambda A$ we denote the $\lambda$-rescaled space, meaning that the metric is given by $\vert xy \vert _ {\lambda \cdot A} = \lambda \cdot \vert xy \vert_A$. An open ball of radius $r$ is denoted by $B_r(p)$, the closed ball is denoted by $\overline{B}_r(p)$. The tangent space at $p \in A$ is denoted by $T_pA$, for the space of directions we write $\Sigma_p$. Elements in $\Sigma_p$ are denoted by arrows. So an element in $\Sigma_p$ representing a shortest path from $p$ to $q$ is denoted by $\uparrow_p^q$, the set of all directions from $p$ to $q$ is denoted by $\Uparrow_p^q$.\par
	\begin{dfn}[$\lambda$-concavity]\label{Def: lambda concavity}
	Denote by $A$ an $n$-dimensional Alexandrov space without boundary and by $\Omega \subset A$ an open set. A locally Lipschitz function
	\[f: \Omega \rightarrow \mathbb{R}\]
	is called $\lambda$-concave on $\Omega$, if for all $x,y \in \Omega$ and each unit-speed shortest path $\gamma$ lying in $\Omega$ and connecting $x$ and $y$ the function
	\[ f \circ \gamma (t) - \frac{\lambda}{2} t^2 \]
	is concave on its domain of definition.
\end{dfn}
\begin{dfn}[$\lambda$-concavity for spaces with boundary]\label{Def: Concavity for spaces with boundary}
	Denote by $A$ an $n$-dimensional Alexandrov space with boundary and by $\Omega \subset A$ an open set. A locally Lipschitz function
	\[f: \Omega \rightarrow \mathbb{R}\]
	is called $\lambda$-concave on $\Omega$, if $f \circ p$ is $\lambda$-concave on $p^{-1}(\Omega) \subset \operatorname{Doub}(A)$, where $\operatorname{Doub}(A)$ denotes the doubling of $A$ and $p: \operatorname{Doub}(A) \rightarrow A$ is the canonical projection.
\end{dfn}
	From the definition of $\lambda$-concavity it is clear that it is sufficient to prove theorem A and theorem B for spaces without Alexandrov-boundary.
		\begin{cor}[Boundary case]\label{Cor: Boundary case}
			If theorem A, theorem B are true for all Alexandrov spaces with empty boundary, then they also hold for arbitrary Alexandrov spaces.
		\end{cor}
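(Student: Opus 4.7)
The plan is to invoke \autoref{Def: Concavity for spaces with boundary} and transfer everything to $\operatorname{Doub}(A)$, which is boundaryless. Given $p \in A$, let $\sigma$ denote the canonical isometric involution on $\operatorname{Doub}(A) \in \operatorname{Alex}^n(\kappa)$ exchanging the two copies, let $\iota: A \hookrightarrow \operatorname{Doub}(A)$ be the canonical isometric embedding onto one copy $A_+$, and let $\pi: \operatorname{Doub}(A) \to A$ be its left inverse. Apply the assumed boundaryless form of theorem B (respectively theorem A) to $(\operatorname{Doub}(A), \iota(p))$, obtaining a $(-2+\varepsilon)$-concave Lipschitz function $\tilde f$ on $B_R(\iota(p))$.

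If $p \notin \partial A$, shrink $R$ so that $B_R(\iota(p)) \subset A_+$ and set $f := \tilde f \circ \iota$; then $f \circ \pi$ equals $\tilde f$ on $\pi^{-1}(B_R(p)) \cap A_+$ and $\tilde f \circ \sigma$ on $\pi^{-1}(B_R(p)) \cap A_-$, and since the two components are disjoint and shortest paths inside $\pi^{-1}(B_R(p))$ remain in one of them, $f \circ \pi$ is $(-2+\varepsilon)$-concave by the isometry property of $\sigma$. If instead $p \in \partial A$, the ball $B_R(\iota(p))$ is $\sigma$-invariant, and one replaces $\tilde f$ by its symmetrization
\[
\tilde f_{\mathrm{sym}} := \tfrac{1}{2}\bigl(\tilde f + \tilde f \circ \sigma\bigr),
\]
which is $\sigma$-invariant, still Lipschitz, and still $(-2+\varepsilon)$-concave as an average of two such functions. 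By $\sigma$-invariance it descends uniquely to $f: B_R(p) \to \mathbb{R}$ with $f \circ \pi = \tilde f_{\mathrm{sym}}$, and \autoref{Def: Concavity for spaces with boundary} hands $(-2+\varepsilon)$-concavity of $f$ back for free. The remaining requirements transfer verbatim: $f(p) = 0$, the isolated maximum at $p$, and the relevant pointwise estimates on $f$ follow from the corresponding ones for $\tilde f$, using that $\sigma$ fixes $\iota(p)$, so $\dist_{\iota(p)} \circ \sigma = \dist_{\iota(p)}$.

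For theorem A one additionally needs $\varepsilon$-constructibility of the descended $f$. Given a non-collapsing sequence $(A_i, p_i) \to (A, p)$ in $\operatorname{Alex}^n(\kappa)$, the doublings converge non-collapsingly, $(\operatorname{Doub}(A_i), \iota(p_i)) \to (\operatorname{Doub}(A), \iota(p))$, and the Hausdorff approximations can be chosen $\sigma$-equivariantly by splicing two copies of an approximation already matching along $\partial A_i$. Applied to the doublings, constructibility produces lifts $\tilde f_i$; running the symmetrize-and-descend step in each $A_i$ gives the required $f_i$, with Lipschitz constant, concavity bound, and sandwich estimate all surviving the averaging. The main mild obstacle is verifying continuity of the doubling operation under non-collapsing GH-convergence in $\operatorname{Alex}^n(\kappa)$ and the $\sigma$-equivariant construction of Hausdorff approximations; both are standard in the literature on Alexandrov spaces with boundary.
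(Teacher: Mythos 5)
Your proposal is correct and follows the same strategy as the paper: pass to the doubling, apply the boundaryless theorem there, symmetrize under the canonical involution, and descend via \autoref{Def: Concavity for spaces with boundary}. The only cosmetic difference is that the paper symmetrizes by taking $\min(\tilde f, \tilde f \circ \sigma)$ whereas you use the average $\tfrac{1}{2}(\tilde f + \tilde f \circ \sigma)$; both operations preserve $(-2+\varepsilon)$-concavity, Lipschitz continuity, the value and isolated maximum at $p$, and the lower bound against $-\dist_p^2$, so either works.
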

			\begin{proof}
				Consider the doubling $D(A)$ of $A$, this space is an Alexandrov space of dimension $n$ without boundary. It comes with a canonical isometric involution $I: D(A) \rightarrow D(A)$, which interchanges the points of the first copy of $A$ with points in the second copy of $A$. \par Theorem A provides a map $f:B_r(p) \subset D(A) \rightarrow \mathbb{R}$. Define $g$ by $f \circ I$. \par
				Obviously $g$ has the same properties as $f$ in regard to theorem A. Thus the minimum $\min(f,g)$ is a map satisfying all conclusions of the above theorems but it is invariant under the canonical projection from \autoref{Def: Concavity for spaces with boundary}. This finishes the proof.
			\end{proof}

		\begin{dfn}[Quasigeodesics]
	A curve $\gamma$ in an Alexandrov space $A$ is called quasigeodesic  if for any $\lambda \in \mathbb{R}$, given a $\lambda$-concave function $f$ the map $f \circ \gamma$ is $\lambda$-concave.
\end{dfn}

\begin{thm}[Existence of quasigeodesics,\cite{MR2408266}]\label{Thm: Existence of Quasigeodesics}
	Let $A$ be an Alexandrov space of finite dimension, then for any point $x \in A$ and any direction $\xi \in \Sigma_x$ there exists a quasigeodesic $\gamma: \mathbb{R}\rightarrow A$ such that $\gamma(0)=x$ and $\gamma^+(0) = \xi$.\par
	Moreover for $x \in \partial A$ and $ \xi \in \Sigma_x \partial A$ the quasigeodesic $\gamma$ can be chosen to lie completely in $\partial A$, where $\Sigma_x \partial A$ is defined by
	$$ \Sigma_x \partial A := \left \lbrace \uparrow \in \Sigma_x A \, \vert \, \text{ There ex. a sequence } y_i \in \partial A \text{ such that } \uparrow_x^{y_i} \rightarrow \uparrow  \right \rbrace. $$
\end{thm}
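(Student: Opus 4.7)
The approach I would take is an Euler-type polygonal approximation, modeled on the strategy of Perelman and Petrunin. Given $x \in A$ and $\xi \in \Sigma_x$, fix a mesh $\epsilon > 0$ and recursively build a broken geodesic $\gamma_\epsilon$ as follows: set $x_0 = x$, $\xi_0 = \xi$, and at step $k$ run a shortest geodesic of length $\epsilon$ from $x_k$ in direction $\xi_k$ to arrive at $x_{k+1}$ with incoming direction $-\eta_{k+1} \in \Sigma_{x_{k+1}}$; then pick $\xi_{k+1} \in \Sigma_{x_{k+1}}$ as a direction realizing $\max_{\zeta \in \Sigma_{x_{k+1}}} \angle(\eta_{k+1}, \zeta)$, which exists by compactness of $\Sigma_{x_{k+1}}$. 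The resulting curve is $1$-Lipschitz and defined on some uniform interval $[0, T]$ depending only on $\kappa$ and the working neighborhood.

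By Arzelà--Ascoli, a subsequence $\gamma_{\epsilon_j}$ converges uniformly on compacta to a $1$-Lipschitz curve $\gamma$ with $\gamma(0) = x$ and $\gamma^+(0) = \xi$; the main task is then to verify that $\gamma$ is a quasigeodesic. For any $\lambda$-concave $f$ on the image, $f \circ \gamma_\epsilon - \lambda t^2 / 2$ is concave on each geodesic subsegment by definition of $\lambda$-concavity, so the only possible upward jumps of the one-sided derivative occur at breakpoints $t_k = k \epsilon$. Via the first-variation formula, each such jump is bounded by a constant multiple of the turning defect $\pi - \angle(-\eta_k, \xi_k)$; by construction $\xi_k$ was chosen to make this as small as possible, and summing over the $O(1/\epsilon)$ breakpoints with the help of Toponogov comparison should produce a total defect of $o(1)$ as $\epsilon \to 0$, forcing concavity of $f \circ \gamma - \lambda t^2 / 2$ in the limit.

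Global existence on $\mathbb{R}$ then follows from Zorn's lemma applied to the poset of quasigeodesic extensions of the initial short segment: the quasigeodesic property survives uniform limits by the previous paragraph, and at any finite right endpoint the local construction produces a further segment that concatenates to a quasigeodesic, contradicting maximality. For the boundary case I would carry out the construction inside the doubling $\operatorname{Doub}(A)$ starting from the symmetric lift of $\xi \in \Sigma_x \partial A$; since $\partial A \subset \operatorname{Doub}(A)$ is extremal, the straightest continuation of a direction tangent to $\partial A$ stays tangent to $\partial A$, so the resulting quasigeodesic is invariant under the reflection involution and projects to a quasigeodesic lying inside $\partial A$.

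The main obstacle throughout is the uniform $o(1)$ control of the cumulative turning defect summed over the $O(1/\epsilon)$ breakpoints. This is delicate because in a general Alexandrov space the space of directions can have diameter strictly less than $\pi$ at singular points, so one cannot simply arrange $\angle(-\eta_k, \xi_k) = \pi$ at each step; the resolution relies on a refined first-variation estimate that tolerates such defects provided they are summable, and ultimately on the comparison geometry of $\Sigma_{x_{k+1}}$ together with the finiteness of dimension. Once that estimate is in hand, compactness, Zorn, and the reflection argument are routine.
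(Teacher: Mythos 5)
The paper does not prove this statement: \autoref{Thm: Existence of Quasigeodesics} is quoted from Petrunin's survey \cite{MR2408266} (which in turn attributes it to Perelman--Petrunin), so there is no in-paper argument to compare your proposal against. I will therefore assess it on its own merits.

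Your overall strategy --- an Euler-type broken-geodesic approximation followed by an Arzel\`a--Ascoli limit --- is in the right spirit and is essentially what Perelman and Petrunin do. But the step you yourself flag as the ``main obstacle'' is in fact a genuine gap, and the mechanism you propose to close it cannot work. The claim that the cumulative turning defect $\sum_k \bigl(\pi - \angle(-\eta_k,\xi_k)\bigr)$ is $o(1)$ as $\epsilon\to 0$ is false in general: at a singular point $x$ one may have $\operatorname{diam}\Sigma_x \le \pi - \delta$ for some fixed $\delta>0$, so a single breakpoint sitting at such a point contributes a defect $\ge \delta$ no matter how $\xi_k$ is chosen, and there is nothing in the construction preventing the broken geodesics from accumulating at singular points (indeed a quasigeodesic may well pass through them). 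The actual argument does not try to make the turnings small; it ensures that the one-sided derivative of $f\circ\gamma$ can only \emph{jump down} at breakpoints, for every semiconcave $f$ simultaneously. Your selection rule --- maximize the angle $\angle(-\eta_{k+1},\zeta)$ over $\zeta\in\Sigma_{x_{k+1}}$ --- does not deliver this. The sign condition $d_xf(\xi)+d_xf(\eta)\le 0$ follows from concavity of $d_xf$ on $T_xA$ when $\angle(\eta,\xi)=\pi$, because then the apex $o_x$ is a midpoint of the shortest path in $T_xA$ between the unit vectors $\eta$ and $\xi$; but when $\angle(\eta,\xi)<\pi$ that midpoint moves off the apex, $d_xf$ at it need not vanish, and the inequality can fail. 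Thus ``turn as much as possible'' is not the right rule when antipodes do not exist. The actual Perelman--Petrunin construction avoids this by characterizing quasigeodesics through a development (``curve unbending'') property and building approximants whose developments are convex by design, a property that is closed under uniform limits; this is also the route sketched in \cite{MR2408266}. Finally, the doubling reduction for the boundary case is plausible in outline, but ``the straightest continuation of a direction tangent to $\partial A$ stays tangent to $\partial A$'' is itself a nontrivial fact about extremal subsets that needs a citation or proof; it does not come for free from your Euler scheme.
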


\section{Regular case}\label{sec: reg case}
	The goal of this section is to establish the induction base for the proof of theorem B and theorem A, that is to prove $P(n,n)$ of \autoref{Main Proposition} and to introduce the self-improvement procedure described in \autoref{subsec:Improvement}.\par
	Assume $T_pA = \mathbb{R}^n$. The idea is to write down a model function $f:=\sum_{j=1}^{N} \varphi_\varepsilon \circ \dist_{q_j}$, for carefully chosen $\varphi_{\varepsilon}: \mathbb{R}\rightarrow \mathbb{R}$, satisfying  the conclusion of theorem B. For such maps there is an obvious way to construct lifts. It remains to proof that the lifts also satisfy the conclusions of theorem B, this is carried out in \autoref{Lem: Concavity of lifts}.
\subsection{Model-construction and lifts}\label{subseq Model constrution}
	\begin{lem}[Model function]
		Denote by $\dist_{p_i}$ the distance function from $p_i$, where $p_i \in \mathbb{R}^n$ are given by $p_i:=e_i,p_{-i}:=-p_i$ for $1 \leq i \leq n$. Set
			$$ \mu_R: B_R(0) \rightarrow \mathbb{R}; x \mapsto \sum_{ \genfrac{}{}{0pt}{}{i=-n}{i \neq 0} }^n - \frac{1}{2}\left[ (1+R)  - \dist_{p_i}(x)\right]^2 +  \frac{ R^2}{2}$$
		Then for every $\varepsilon>0$ there exits $\overline{R}>0$ such that the function $\mu_R$ satisfies the conclusion of theorem B for all $0<R<\overline{R}$.
	\end{lem}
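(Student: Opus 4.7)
The plan is to work in Euclidean coordinates, where $\mu_R$ is smooth on $B_R(0)$—the points $p_i = \pm e_j$ lie at distance $1$ from $0$—so every conclusion of theorem B can be read off the Hessian at the origin together with a uniform error bound. As a preliminary simplification, I would expand the square and use the exact identity
\begin{equation*}
  \sum_{\substack{i = -n \\ i \neq 0}}^{n}\dist_{p_i}^2(x) \;=\; 2n\,|x|^2 + 2n,
\end{equation*}
which rewrites the definition as
\begin{equation*}
  \mu_R(x) \;=\; (1+R)\,S(x) \,-\, n|x|^2 \,-\, 2n(1+R), \qquad S(x) := \sum_{\substack{i = -n \\ i \neq 0}}^{n}\dist_{p_i}(x).
\end{equation*}
Direct evaluation then gives $\mu_R(0)=0$, and the $\pm$-symmetry of $\{p_i\}$ forces $\nabla \mu_R(0)=0$.

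For the Hessian I use $\operatorname{Hess}\dist_{p_i}(x) = \dist_{p_i}(x)^{-1}(I - u_i u_i^{\mathsf T})$ with $u_i(x):=(x-p_i)/|x-p_i|$, together with the identities $u_i(0) = -p_i$ and $\sum_i p_i p_i^{\mathsf T} = 2I$. These yield
\begin{equation*}
  \operatorname{Hess}\mu_R(0) \;=\; (1+R)(2n-2)I \,-\, 2nI \;=\; \bigl(-2 + 2(n-1)R\bigr)\,I.
\end{equation*}
The key observation is that the same $\pm$-symmetry kills the first-order Taylor coefficient at $0$ of the matrix-valued map $x \mapsto M(x) := \sum_i \dist_{p_i}(x)^{-1}(I - u_i u_i^{\mathsf T})$: for each fixed $j$, the derivatives coming from the indices with $p_i = e_j$ and $p_i = -e_j$ cancel pairwise, both in the scalar weight $\dist_{p_i}^{-1}$ and in the projector $u_i u_i^{\mathsf T}$. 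Consequently
\begin{equation*}
  \operatorname{Hess}\mu_R(x) \;=\; \bigl(-2 + 2(n-1)R\bigr)\,I \,+\, \mathcal{O}(|x|^2)
\end{equation*}
uniformly on $B_R(0)$, with an absolute implied constant.

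From this single estimate the three conclusions of theorem B follow once $\overline R$ is chosen small. Conclusion (i), $(-2+\varepsilon)$-concavity on $B_R(0)$, is the Euclidean translation of $\operatorname{Hess}\mu_R \leq (-2+\varepsilon)I$, which holds once $2(n-1)R + CR^2 < \varepsilon$. Conclusion (ii) is immediate because the Hessian at $0$ is negative definite, so $0$ is an isolated maximum with value $0$. For conclusion (iii) I would look at the auxiliary function $g := \mu_R + \dist_0^2$, which satisfies $g(0)=0$, $\nabla g(0) = 0$, and
\begin{equation*}
  \operatorname{Hess} g(x) \;=\; 2(n-1)R\,I \,+\, \mathcal{O}(|x|^2);
\end{equation*}
for $\overline R$ small this Hessian is positive semi-definite throughout $B_R(0)$, hence $g$ is convex there, and therefore $g \geq g(0) = 0$, i.e.\ $\mu_R \geq -\dist_0^2$. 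Lipschitz continuity of $\mu_R$ on $B_R(0)$ is automatic from the boundedness of the gradient of this smooth function.

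The only step requiring some care is the cancellation claim underpinning $\operatorname{Hess}\mu_R(x) = \operatorname{Hess}\mu_R(0) + \mathcal{O}(|x|^2)$; this is a short direct computation organised by pairing the indices $p_i = e_j$ and $p_i = -e_j$. I do not expect a real obstacle here—everything remains in Euclidean geometry, and no lifting, comparison theorem, or Alexandrov-specific machinery enters.
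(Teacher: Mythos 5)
Your proof is correct and follows essentially the same approach as the paper: a Euclidean Taylor/Hessian computation around the origin that exploits the $\pm$-symmetry of the points $p_i$. You organize the uniform error bound (via the even symmetry killing the first-order term of $\operatorname{Hess}\mu_R$) and the lower bound $\mu_R \geq -\dist_0^2$ (via the auxiliary convex function $g=\mu_R+\dist_0^2$) more explicitly than the paper's terse ``direct calculations,'' but the underlying mechanism is the same.
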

	\begin{proof}
			Given a point $p \in \mathbb{R}^n$ and the distance function $\dist_p$. Using the abbreviation $p_e = p/ \Vert p \Vert$, its Taylor series up to order three is given by
				$$ \dist_p(x)  = \Vert p \Vert -\langle x, p_e \rangle +\frac{1}{2} \left( \frac{\Vert x \Vert^2}{\Vert p \Vert} - \frac{\langle x,p_e \rangle^2}{\Vert p \Vert} \right)+ \frac{1}{2} \left(   \frac{\Vert x \Vert ^2 \langle x, p_e \rangle}{\Vert p \Vert^2} - \frac{\langle x, p_e \rangle^3}{\Vert p \Vert^2} \right).$$
			Using direct calculations one obtains
				$$\mu_R(x)=-\Vert x \Vert^2 \left(1 - R(n-1) \right) + o(\Vert x \Vert^3),$$
			which implies the result.
	\end{proof}
		Denote by $\theta_r: B_1(o_p) \subset T_pA \rightarrow B_1(p) \subset  \frac{1}{r}A$ the Hausdorff-approximations coming from the convergence $(\frac{1}{r}A,p) \rightarrow (T_pA,o_p)$.
	\begin{dfn}[Lift of the model function]\label{Def: r-Lift}
		Assume $T_pA=\mathbb{R}^n$. Then once can define the $r$-lift $f_r:B_R(p) \subset \frac{1}{r} A \rightarrow \mathbb{R}$ of $\mu_R: B_R(0) \subset \mathbb{R}^n \rightarrow \mathbb{R}$ by the formula
			$$f_r(x)= \sum_{ \genfrac{}{}{0pt}{}{i=-n}{i \neq 0} }^n - \frac{1}{2}\left[ (1+R)  - \dist_{\theta_r(p_i)}(x)\right]^2 +  \frac{ R^2}{2}.  $$
	\end{dfn}
	The proof that the model function is $(-2+\varepsilon)$-concave highly relies on the structure of $\mathbb{R}^n$ and on the differentiability of distance functions. These methods clearly do not carry over to the $r$-lifts. Hence one needs to give a new proof for the $(-2+\varepsilon)$-concavity of the model function, which uses only tools of Alexandrov geometry. This is carried out in the next lemma.
	\begin{lem}[Concavity of lifts]\label{Lem: Concavity of lifts}
		For every $\varepsilon>0$ there exists $R, \overline{r} >0$ such that for all $r \leq \overline{r}$ the $r$-lift $f_{r}$ defined in \autoref{Def: r-Lift} is $(-2+\varepsilon)$-concave on $B_R(p) \subset \frac{1}{r}A$.
	\end{lem}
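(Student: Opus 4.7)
The plan is to bound the distributional second derivative of $f_r$ along an arbitrary unit-speed geodesic $\gamma\subset B_R(p)\subset \tfrac{1}{r}A$ by $-2+\varepsilon$. With the abbreviations $c=1+R$, $q_i=\theta_r(p_i)$, $d_i=\dist_{q_i}$, and $\phi(s)=-\tfrac{1}{2}(c-s)^2+\tfrac{R^2}{2}$, I have $f_r=\sum_i\phi\circ d_i$ with $\phi'(s)=c-s$ and $\phi''(s)=-1$. The two inputs I would use are: (i) the semiconcavity $(d_i^2)''\le 2+\varepsilon_1(r)$ (in the distributional sense) coming from Toponogov's theorem in $\tfrac{1}{r}A$, whose curvature lower bound $r^2\kappa$ tends to $0$, so $\varepsilon_1(r)\to 0$; and (ii) the fact that, by the Gromov-Hausdorff convergence $(\tfrac{1}{r}A,p)\to (T_pA,o_p)=(\mathbb{R}^n,0)$ together with the choice of $q_i$ as lifts of $\pm e_i$, the tuple $\{(q_i,q_{-i})\}_{i=1}^n$ is an $(n,\delta(r))$-strainer at $p$ with $\delta(r)\to 0$. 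After a harmless $o(1)$-adjustment of each $q_i$ along the geodesic from $p$, I may arrange $|pq_i|=1$ exactly, which by the triangle inequality ensures the sign condition $c-d_i(x)\ge 0$ on all of $B_R(p)$.

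Granting these inputs, the distributional chain rule (valid because $\phi'\ge 0$ on the relevant range and $\phi$ is $C^2$ concave) gives, using $(d_i^2)''=2d_i d_i''+2(d_i')^2$,
\[
(\phi\circ d_i\circ\gamma)'' = -\tfrac{c}{d_i}(d_i')^2+\tfrac{c-d_i}{2d_i}(d_i^2)'' \;\le\; -\tfrac{c}{d_i}(d_i')^2+\tfrac{(c-d_i)(2+\varepsilon_1)}{2d_i}.
\]
The strainer structure at any $x\in B_R(p)$ implies almost-orthonormality of the directions $\{\uparrow_x^{q_i}\}$ in antipodal pairs, so the first variation formula $d_i'(t)=-\cos\angle(\uparrow_{\gamma(t)}^{q_i},\gamma'(t))$ yields $\sum_i(d_i'(t))^2=2+o_r(1)$ uniformly along $\gamma$; moreover $d_i=1+O(R+\delta)$. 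Summing over $i$ and using $\sum_i\tfrac{c-d_i}{2d_i}=\tfrac{c}{2}\sum_i\tfrac{1}{d_i}-n=nR+O(R^2+\delta)$ I obtain
\[
(f_r\circ\gamma)''\;\le\; -2-2R+2nR+O(R^2)+O(\delta(r))+O(\varepsilon_1(r))\;=\;-2+2(n-1)R+o_r(1).
\]
Fixing $R$ so that $2(n-1)R<\varepsilon/2$ and then $\overline{r}$ so small that all remaining errors are below $\varepsilon/2$ completes the argument.

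The main obstacle is the uniform almost-orthonormality of the frame $\{\uparrow_x^{q_i}\}$ throughout $B_R(p)$, not just at the basepoint $p$. At $p$ this is immediate from GH convergence, but at a variable $x$ it requires a careful angle comparison: the triangle $q_ixq_{-i}$ has side lengths $|xq_i|,|xq_{-i}|=1+O(R+\delta)$ and $|q_iq_{-i}|=2+O(\delta)$, which is almost degenerate and forces $\angle_x(q_i,q_{-i})\approx\pi$; for $j\ne\pm i$ an analogous computation using the strainer cross-relations yields $\angle_x(q_i,q_j)\approx\pi/2$. A secondary but routine point is justifying the distributional chain rule for $\phi\circ d_i$ when $d_i$ is only semiconcave; since $\phi'(d_i)=c-d_i\ge 0$ on the relevant range, this reduces to applying the upper bound on $d_i''$ deduced from $(d_i^2)''\le 2+\varepsilon_1$, a standard maneuver for semiconcave functions.
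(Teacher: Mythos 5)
Your proof is correct and relies on the same two inputs as the paper --- the almost-sharp semiconcavity of $\dist_{q_i}^2$ under the vanishing rescaled curvature bound, and the BGP estimate $\sum\cos^2\approx 1$ for an $(n,\delta)$-explosion at a variable point of $B_R(p)$ --- the only difference being that you verify $(-2+\varepsilon)$-concavity via a distributional second-derivative computation along geodesics, whereas the paper verifies the equivalent midpoint inequality $2f_r(m)-f_r(x)-f_r(y)\geq(1-\varepsilon/2)\tfrac{|xy|^2}{2}$ by invoking Kapovitch's Lemma 4.2 for each summand and then applying \autoref{Lemma:BGP}, which sidesteps the chain-rule and sign bookkeeping you flag at the end. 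One small repair is needed in your sign normalization: geodesics in Alexandrov spaces need not extend, so you cannot always reposition $q_i=\theta_r(p_i)$ along the geodesic $pq_i$ to force $|pq_i|=1$ exactly; instead take $c_i:=|pq_i|+R$ in place of $c=1+R$ (or shrink $R$ by $o_r(1)$), which still gives $c_i-d_i\geq 0$ on $B_R(p)$ by the triangle inequality with only $o_r(1)$ changes to your estimates.
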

		\begin{proof}
			The functions $f_r$ are clearly continuous, thus in order to prove $(-2+\varepsilon)$-concavity it is sufficient to check
				\begin{equation}\label{Eq: Concavity inequality}
				2 f_r(m) - f_r(x) - f_r(y) \geq   - \frac{-2 + \varepsilon}{4}  \cdot \vert xy \vert_{\frac{1}{r}}^2 =  \left(1-\frac{\varepsilon}{2}\right)  \vert xy \vert_{\frac{1}{r}} ^2 \cdot \frac{1}{2}
				\end{equation}
			for every $x,y \in B_R(p) \subset \frac{1}{r}A$ and every midpoint $m$ between $x$ and $y$.\par
			Set
				$$ f_{r,i}(x):= - \frac{1}{2 } \left( (1+R)-\vert x \theta_r(p_i) \vert_{\frac{1}{r}} \right)^2+ \frac{R^2}{2}. $$
			One can bound $	2f_{r,i}(m) - f_{r,i}(x) - f_{r,i}(y)$ below, using similar arguments as in the proof of \cite{MR1904560}[Lemma 4.2], by
				$$ - \frac{\varepsilon }{8n}  \cdot \frac{\vert xy \vert^2}{2}  + \left[ \cos^2 \left( \vert \uparrow_m^x \Uparrow_m^{\theta_r(p_i)} \vert \right)  +\cos^2 \left( \left\vert \uparrow_m^y \Uparrow_m^{\theta_r(p_i)} \right\vert \right) \right] \frac{\vert xy \vert^2}{8},  $$
			where $\vert \uparrow_m^x \Uparrow_m^{\theta_r(p_i)} \vert$ denotes the angle between the direction of a shortest path from $m$ to $x$ and all directions of shortest paths from $m$ to $\theta_r(p_i)$. Summation over $i$ gives
				$$ 2f_r(m)-f_r(x) -f_r(y) \geq  - \frac{\varepsilon}{2}\cdot \frac{\vert xy \vert^2}{2} + \sum_{ \genfrac{}{}{0pt}{}{i=-n}{i \neq 0}}^n \left[ \cos^2 \left( \vert \uparrow_m^x \Uparrow_m^{\theta_r(p_i)} \vert \right)  +\cos^2 \left( \left\vert \uparrow_m^y \Uparrow_m^{\theta_r(p_i)} \right\vert \right) \right]\cdot  \frac{\vert xy \vert^2}{2}.   $$
			The expression in the brackets can be estimated using \autoref{Lemma:BGP}, thus finishing the proof.
		\end{proof}
		In order to state \autoref{Lemma:BGP} properly, one needs to recall the Definition of an $(n,\delta)$-explosion. That is the set of directions in $\Sigma_p$ coming from an $(n,\delta)$ strainer around $p$.
		\begin{dfn}[Explosion]\label{Def: (n,d) explosion}
		Let $\Sigma$ be an Alexandrov space of dimension $(n-1)$ and curvature $\geq 1$. A collection $A_1, \ldots, A_k , B_1 , \ldots , B_k \subset \Sigma$ of compact subsets, $1 \leq k \leq n$ satisfying
			$$ \vert A_i B_i \vert \geq \pi -\delta \text { for all } 1 \leq i \leq k $$
		and
			$$ \vert A_i A_j \vert, \vert B_i B_j \vert , \vert A_i B_j \vert  \geq \frac{\pi}{2} - \delta \text{ for all } 1 \leq i, j \leq k \text { whenever } i \neq j  $$
		is called a $(k,\delta)$-explosion and denoted by $(A_i,B_i)_{i=1}^k$.
	\end{dfn}
	\begin{lem}[\cite{MR1185284}(Lemma 9.3)]\label{Lemma:BGP}
		Let $\Sigma$ be an Alexandrov space of dimension $n-1$ and curvature $\geq 1$. If  $(\Uparrow_m^{s_i},\Uparrow_m^{s_{-i}})_{i=1}^n$ is an $(n,\delta)$-explosion, then for any $\uparrow \in \Sigma$ one has
			$$ \left \vert  \sum_{i=1}^n \cos^2 \left( \vert \uparrow \Uparrow_m^{s_i} \vert  \right) -1 \right \vert \leq \varepsilon(\delta). $$
			Here $\varepsilon$ denotes a real valued function $\varepsilon:\mathbb{R}^+ \rightarrow \mathbb{R}^+$ satisfying $\varepsilon(\delta) \rightarrow 0$ for $\delta \rightarrow 0$.
	\end{lem}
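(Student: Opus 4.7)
The plan is to prove the statement by a contradiction/compactness argument reducing to the exact Pythagorean identity on the round sphere. In the model case $\Sigma = S^{n-1}$ with $A_i = \{e_i\}$ and $B_i = \{-e_i\}$ forming a genuine $(n,0)$-explosion of orthonormal vectors, the identity $\sum_{i=1}^{n} \cos^2(|\uparrow e_i|) = 1$ holds exactly as $\Vert \uparrow \Vert^2 = 1$ for the unit vector $\uparrow \in S^{n-1} \subset \mathbb{R}^n$. The lemma is the quantitative stability of this identity under the $\delta$-perturbation of strict orthonormality encoded in the explosion condition.

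Suppose the claim fails. Then there exist $\varepsilon_0 > 0$, a sequence $\delta_k \to 0$, spaces $\Sigma_k \in \operatorname{Alex}^{n-1}(1)$, $(n,\delta_k)$-explosions $(A_i^k, B_i^k)_{i=1}^{n}$ in $\Sigma_k$, and directions $\uparrow_k \in \Sigma_k$ with $\bigl| \sum_i \cos^2(|\uparrow_k A_i^k|) - 1 \bigr| > \varepsilon_0$. Each $\Sigma_k$ has diameter at most $\pi$, so by Gromov's precompactness theorem one may pass to a subsequence with $\Sigma_k \to \Sigma_\infty$ in the Gromov-Hausdorff sense, $\Sigma_\infty \in \operatorname{Alex}(1)$. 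Along Hausdorff approximations the compact sets and directions converge $A_i^k \to A_i^\infty$, $B_i^k \to B_i^\infty$, $\uparrow_k \to \uparrow_\infty$; continuity of distance promotes the explosion inequalities to the exact case $\delta = 0$ in the limit, while $\bigl| \sum_i \cos^2(|\uparrow_\infty A_i^\infty|) - 1 \bigr| \geq \varepsilon_0$ persists.

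Next I identify $\Sigma_\infty$ as the round sphere by iterated maximal-diameter rigidity. Since $\mathrm{diam}(\Sigma_\infty) \leq \pi$, the limit inequality $|A_i^\infty B_i^\infty| \geq \pi$ forces $|ab| = \pi$ for every $a \in A_i^\infty$, $b \in B_i^\infty$. Toponogov's maximal-diameter theorem produces a spherical-suspension decomposition of $\Sigma_\infty$ at each antipodal pair, and the remaining explosion points, lying at distance $\geq \pi/2$ from both poles, must sit in the equator of that suspension. Iterating over the $n$ pairs, combined with the dimension lower bound $\dim \Sigma_\infty \geq n-1$ (see the last paragraph), yields $\Sigma_\infty \cong S^{n-1}$ isometrically. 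Moreover, since every $a \in A_i^\infty$ is antipodal to every $b \in B_i^\infty$, each $A_i^\infty$ collapses to a singleton $\{a_i\}$ with $B_i^\infty = \{-a_i\}$; the inequalities $|a_i a_j| \geq \pi/2$ together with $|(-a_i)\, a_j| \geq \pi/2$ force $|a_i a_j| = \pi/2$ for $i \neq j$. Hence $\{a_1, \ldots, a_n\}$ is honestly orthonormal on $S^{n-1}$, so the Euclidean Pythagorean identity gives $\sum_i \cos^2(|\uparrow_\infty a_i|) = 1$ exactly, contradicting $\varepsilon_0 > 0$.

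The main obstacle is the rigidity step, specifically preserving the dimension under the Gromov-Hausdorff limit so that the iterated suspensions truly fill out $S^{n-1}$ rather than collapse into a lower-dimensional quotient. This non-collapse is a standard consequence of strainer theory: the $(n,\delta_k)$-explosion yields via the usual strainer map a bi-Lipschitz chart with uniform lower volume bound, precluding Hausdorff-dimension drop in the Gromov-Hausdorff limit, see for example the arguments in \cite{MR1185284}.
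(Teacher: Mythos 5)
The paper does not prove this statement; it is quoted verbatim from Burago--Gromov--Perelman, so there is no internal proof to compare against. On its own terms, your compactness-and-rigidity argument is sound in outline and is a genuinely ``softer'' alternative to BGP's direct quantitative proof (which builds an explicit almost-isometry from $\Sigma$ to $S^{n-1}$ out of the explosion and reads off the inequality from the first variation/cosine rule). Your approach buys brevity and conceptual clarity at the price of any explicit dependence of $\varepsilon$ on $\delta$; for the lemma as stated, that trade is acceptable.

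There is, however, one step that as written would be circular and should be replaced. Your ``non-collapse'' paragraph appeals to strainer theory to produce a bi-Lipschitz chart and a volume lower bound, citing \cite{MR1185284}. But an $(n,\delta)$-explosion in an $(n-1)$-dimensional space of curvature $\geq 1$ is not a strainer in the Theorem~5.4 sense (it does not map to $\mathbb{R}^{n}$); what it actually yields is an almost-isometry onto $S^{n-1}$, and that is exactly BGP Theorem~9.4--9.5, which is \emph{derived from} Lemma~9.3. Fortunately the appeal is unnecessary: the iterated-suspension step you already perform also gives the dimension lower bound for free. If $\dim \Sigma_\infty = d$, then after $d$ suspensions the equator $\Sigma_d$ is $0$-dimensional (hence at most two points), yet it must still contain the remaining $n-d$ antipodal pairs $\{a_i,b_i\}$ with $|a_ia_j|=\pi/2$ for $i\neq j$; this forces $n-d \leq 1$, i.e.\ $d\geq n-1$. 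Combined with the \emph{upper} bound $\dim \Sigma_\infty \leq n-1$ (which you do genuinely need and which follows from packing/Bishop--Gromov estimates that are independent of Lemma~9.3 -- note that if $\dim\Sigma_\infty$ were $\geq n$, the limit would be a spherical join $S^{n-1}\ast \Sigma'$ with $\Sigma'\neq\emptyset$ and the sum $\sum_i\cos^2|\uparrow a_i|=\cos^2 t$ need \emph{not} equal $1$), you get $\Sigma_\infty\cong S^{n-1}$ as desired. With the strainer-theoretic appeal replaced by this count, the argument closes.
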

	This gives us the desired building blocks for the upcoming self-improvement procedure. Indeed since $f_r \rightarrow \mu_R$ for $r \rightarrow 0$ for sufficiently small $r$ all $r$-lifts $f_r$ satisfy (\ref{Eq: Schranken}) on $B_R(p) \setminus B_\frac{R}{4}(p)$ in $\frac{1}{r}A$. On one hand this property is preserved if one considers the rescaling
		$$g_r: B_{R\cdot r}(p) \subset A \rightarrow 0, x \mapsto r^2 \cdot f_r(x).$$
	On the other hand by (\ref{Eq: Concavity inequality}) $g_r$ is also a $(-2+\varepsilon)$-concave function. This proves:
	\begin{cor}[Self-improvement assumption]\label{Corollary: Self-improvement assumption}
		Let $A$ be an $n$-dimensional Alexandrov space and $p \in A$ a regular point. For every $\varepsilon>0$ there exists $R>0$ and a sequence $(f_i)_{i \in \mathbb{N}}$ of $4nR/2^{i-1}$-Lipschitz, $(-2+\varepsilon)$-concave functions
			$$f_i :B_{\frac{R}{2^{i-1}}}(p) \subset A \rightarrow \mathbb{R}$$
		satisfying
		$$ -\vert  xp \vert^2 \leq f_i(x) \leq -(1-\varepsilon)\vert xp \vert^2  \text{ for all } x \in B_{\frac{R}{2^{i-1}}}(p) \setminus B_{\frac{R}{ 2^{i+1}}}(p) \subset A.$$
	\end{cor}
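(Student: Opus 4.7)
The plan is to take the $r$-lifts $f_r$ of \autoref{Def: r-Lift} at the sequence of scales $r_i := 1/2^{i-1}$ and pull them back from $\frac{1}{r_i}A$ to $A$ by multiplying with $r_i^2$. The three conclusions — concavity, Lipschitz constant, and the quadratic sandwich — all come from scale-invariant identities once one checks how $\lambda$-concavity, Lipschitz constants, and squared distances transform under rescaling of the metric.

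First I fix $\varepsilon > 0$ and, using \autoref{Lem: Concavity of lifts}, choose $R,\overline{r}>0$ so that each $f_r : B_R(p) \subset \frac{1}{r}A \to \mathbb{R}$ is $(-2+\varepsilon)$-concave for $r \leq \overline{r}$. Shrinking $\overline{r}$ if necessary, the Hausdorff approximations $\theta_r$ yield $f_r \to \mu_R$ uniformly on $B_R(p)$, so the sandwich estimate (\ref{Eq: Schranken}) transfers from $\mu_R$ to $f_r$ on the annulus $B_R(p) \setminus B_{R/4}(p) \subset \frac{1}{r}A$. Each of the $2n$ summands of $f_r$ is a composition of a $1$-Lipschitz distance function with a quadratic whose derivative on the relevant distance range is bounded by $2R$, hence $f_r$ is $4nR$-Lipschitz on $B_R(p)$.

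Setting $r_i := 1/2^{i-1}$ (ignoring finitely many initial terms, which can be redefined by hand) and $f_i(x) := r_i^2 \cdot f_{r_i}(x)$ for $x \in B_{R/2^{i-1}}(p) \subset A$, all three desired properties follow by bookkeeping. The domain matches because $B_R(p) \subset \frac{1}{r_i}A$ equals $B_{R r_i}(p) \subset A$, and the annulus on which the sandwich is known corresponds exactly to $B_{R/2^{i-1}}(p) \setminus B_{R/2^{i+1}}(p) \subset A$. Using $\vert xp \vert_A = r_i \cdot \vert xp \vert_{\frac{1}{r_i}A}$, multiplication by $r_i^2$ converts the sandwich bound for $f_{r_i}$ into $-\vert xp \vert_A^2 \leq f_i(x) \leq -(1-\varepsilon)\vert xp \vert_A^2$. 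The Lipschitz constant scales as $r_i \cdot 4nR = 4nR/2^{i-1}$. For concavity, if $\tilde\gamma$ is unit-speed in $A$ then $\gamma(t) := \tilde\gamma(r_i t)$ is unit-speed in $\frac{1}{r_i}A$; the concavity of $f_{r_i} \circ \gamma(t) + (1-\varepsilon/2) t^2$ in $t$ becomes, after multiplying by $r_i^2$ and substituting $u = r_i t$, the concavity of $f_i \circ \tilde\gamma(u) + (1-\varepsilon/2) u^2$ in $u$, i.e.\ $(-2+\varepsilon)$-concavity of $f_i$ on $A$.

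There is no genuine obstacle: the substantive content, namely the $(-2+\varepsilon)$-concavity of the $r$-lifts proved via \autoref{Lemma:BGP}, is already contained in \autoref{Lem: Concavity of lifts}, and everything else is the scaling bookkeeping above.
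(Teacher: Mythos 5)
Your proposal is correct and follows essentially the same route as the paper: take the $r$-lifts $f_r$ from \autoref{Def: r-Lift}, use \autoref{Lem: Concavity of lifts} together with $f_r \to \mu_R$ to get concavity and the quadratic sandwich on the annulus, and then rescale to $A$ via $x \mapsto r^2 f_r(x)$ on $B_{Rr}(p)$, observing that $\lambda$-concavity, Lipschitz constants, and the sandwich all transform correctly under the $r^2$ rescaling (the paper phrases the concavity invariance via the scale-invariant midpoint inequality (\ref{Eq: Concavity inequality}), whereas you phrase it via substitution along unit-speed geodesics — these are equivalent). The only loose end is the same one the paper glosses over, namely pinning the exact Lipschitz constant down to $4nR$ rather than $4nR$ plus a small error coming from $\dist_{\theta_r(p_i)}(p)$ not being exactly $1$; this is absorbed by choosing $r$ small enough.
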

	\subsection{Self-improvement procedure}\label{subsec:Improvement}~\par
		Starting from the situation of \autoref{Corollary: Self-improvement assumption}, it will be explained how to construct functions promised by theorem B and theorem A. Assume without loss of generality that $R=1$, otherwise do an appropriate rescaling.\par
		The key idea to constructing $F$ as in theorem B, theorem A is taking $f_1,f_2$ as in \autoref{Corollary: Self-improvement assumption} and considering the map $ F_1:=\min\lbrace \varphi \circ f_1 , f_2 \rbrace$ for a real valued function $\varphi: \mathbb{R} \rightarrow \mathbb{R}$. If $\varphi$ is chosen properly, $\varphi \circ f_1$ will still be $(-2+\varepsilon)$-concave and one will have $F_1=\varphi\circ f_1$ on $B_1(p) \setminus B_{1-\varepsilon}(p)$, $F_1= f_2$ on $ B_{\frac{1}{4}}(p)$ and $F_1$ will satisfy
			$$-(1+\varepsilon)\vert  xp \vert^2 \leq F_1(x) \leq -(1-\varepsilon)\vert xp \vert^2  \text{ for all } x \in B_{1}(p) \setminus B_{\frac{1}{8}}(p) \subset  A.$$
		An inductive argument will conclude the proof. The next Lemma will specify the desired properties of the reparametrization function $\varphi$.
	\begin{lem}[Reparametrization function]\label{Le: Reparametrization fucntion}
		Fix $\delta:=\frac{1}{100}$ then for every $\varepsilon>0$ there exist a function $\varphi_{\varepsilon}: [-1,0] \rightarrow \mathbb{R}$ such that
		\begin{enumerate}
			\item \label{Cond: Reparemtrization function is C2} The function  $\varphi_{\varepsilon}$ is two times continuously differentiable.
			\item \label{Cond: Reparemtrization function extends identity} The function $\varphi_{\varepsilon}$ satisfies
				\[
					\varphi_{\varepsilon}(x) =
						\begin{cases}
							(1-\varepsilon) \cdot x, &  \text{ for } -\left( \frac{1}{4} + \delta \right)^2  \leq  x   \leq  0 \\
							\frac{1}{1-\varepsilon} \cdot x, & \text{ for }   - \left( \frac{1}{2} + \delta \right)^2   \leq  x  \leq - \left( \frac{1}{2} - \delta \right)^2\\
							x, &  \text{ for }    x   \leq  -(1-\delta)^2 \\
						\end{cases}.
				\]
			\item \label{Cond: Reparametrization function has universal bound on derivatives} There exists a bound $0<B$, independent of $\varepsilon$, such that for every $x \in[-1,0]$
			\begin{align*}
				1 -  \varepsilon \cdot B &\leq \varphi'_{\varepsilon}(x) \leq 1+  \varepsilon   \cdot B, \\
				- \varepsilon \cdot B & \leq \varphi''_{\varepsilon}(x) \leq \varepsilon \cdot B , \\
				\left( 1 + \varepsilon \cdot B \right)  \cdot x & \leq \varphi_{\varepsilon}(x)  \leq \left( 1 - \varepsilon \cdot B \right) \cdot x .
			\end{align*}
		\end{enumerate}
	\end{lem}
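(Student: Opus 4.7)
The plan is an elementary $C^2$-interpolation. Write $L_1(x)=(1-\varepsilon)x$, $L_2(x)=x/(1-\varepsilon)$, $L_3(x)=x$, and denote the three prescribed subintervals of $[-1,0]$ by
\[
I_1=[-(\tfrac14+\delta)^2,0],\quad I_2=[-(\tfrac12+\delta)^2,-(\tfrac12-\delta)^2],\quad I_3=[-1,-(1-\delta)^2],
\]
so that condition (ii) demands $\varphi_\varepsilon\equiv L_j$ on $I_j$. For $\delta=\tfrac{1}{100}$ one checks directly ($0.0676<0.2401$ and $0.2601<0.9801$) that these three intervals are pairwise disjoint, separated by two open gaps which I denote $G_{12}$ and $G_{23}$.

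First I would fix, once and for all and independently of $\varepsilon$, a $C^\infty$ function $s_{12}\colon G_{12}\to[0,1]$ which is identically $0$ in a neighborhood of the right endpoint of $G_{12}$ (the interface with $I_1$) and identically $1$ near the left endpoint (the interface with $I_2$), and analogously $s_{23}\colon G_{23}\to[0,1]$. On each gap I then set
\[
\varphi_\varepsilon(x)=\bigl(1-s_{j,j+1}(x)\bigr)\,L_j(x)+s_{j,j+1}(x)\,L_{j+1}(x),
\]
and on each $I_j$ I set $\varphi_\varepsilon=L_j$. Because $s_{j,j+1}$ is locally constant near the joins, all derivatives of $\varphi_\varepsilon$ there agree with those of the appropriate linear piece; hence $\varphi_\varepsilon\in C^2([-1,0])$ and conditions (i) and (ii) hold by construction.

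For the derivative bounds in (iii), observe that for $0<\varepsilon\le\tfrac12$ each slope $L_j'\in\{1-\varepsilon,(1-\varepsilon)^{-1},1\}$ lies in $[1-\varepsilon,1+2\varepsilon]$, and consequently $|L_j(x)-L_k(x)|\le 3\varepsilon|x|$ on $[-1,0]$. On a gap interval, differentiating the defining convex combination gives
\[
\varphi_\varepsilon'(x)=s'_{j,j+1}(x)\bigl(L_{j+1}(x)-L_j(x)\bigr)+\bigl(1-s_{j,j+1}(x)\bigr)\,L_j'(x)+s_{j,j+1}(x)\,L_{j+1}'(x);
\]
the last two terms form a convex combination of values in $[1-\varepsilon,1+2\varepsilon]$, while the first is bounded by $\|s_{j,j+1}'\|_\infty\cdot 3\varepsilon$. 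Since $\|s_{j,j+1}'\|_\infty$ depends only on $\delta$, this yields $|\varphi_\varepsilon'-1|\le B\varepsilon$ for an absolute constant $B$. A parallel computation involving $s_{j,j+1}''$ controls $|\varphi_\varepsilon''|\le B\varepsilon$, and the pointwise bound on $\varphi_\varepsilon$ itself follows because a convex combination of two quantities of the form $(1+O(\varepsilon))\,x$ is again of this form.

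The only delicate point is that $B$ must be independent of $\varepsilon$, which is automatic because the cutoffs $s_{12},s_{23}$ are chosen once and for all before $\varepsilon$ enters the construction. I expect no genuine obstacle beyond the direct numerical check of the disjointness of $I_1,I_2,I_3$ recorded above.
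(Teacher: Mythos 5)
Your construction is correct, and it takes a genuinely different route from the paper. The paper bridges the two gap intervals by explicit degree-$5$ polynomials $p_{\varepsilon,1}, p_{\varepsilon,2}$ chosen to match value, first and second derivative of the linear pieces at each junction (Hermite interpolation), and then verifies (iii) by direct, admittedly tedious, computation with those polynomials. You instead fix once and for all two $C^\infty$ cutoffs $s_{12}, s_{23}$, depending only on $\delta$, locally constant near the gap endpoints, and interpolate via the convex combination $(1-s)L_j + sL_{j+1}$. The structural point that makes this work is precisely the one you isolate: the three slopes $1-\varepsilon$, $(1-\varepsilon)^{-1}$, $1$ all differ from $1$ by $O(\varepsilon)$, so that every cross-term produced by differentiating the cutoff (bounded by $\|s'\|_\infty$, $\|s''\|_\infty$, which depend only on $\delta$) is again $O(\varepsilon)$ with an $\varepsilon$-independent constant, and the non-cross-terms are convex combinations of the slopes. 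This makes the uniformity of $B$ transparent without computing anything, which is arguably cleaner than the paper's polynomial bookkeeping; the paper's version has the mild advantage of yielding an explicit closed-form formula. One small caveat implicit in your argument (and indeed in the lemma statement itself): the slope $(1-\varepsilon)^{-1}$ and the estimate $(1-\varepsilon)^{-1}\le 1+2\varepsilon$ require $\varepsilon$ bounded away from $1$, say $\varepsilon\le\tfrac12$; this is the regime in which the lemma is actually used, and your explicit restriction is the honest way to state it.
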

		\begin{proof}
			Extend $\varphi_{\varepsilon}$ by polynomials $p_{\varepsilon,1}$, $p_{\varepsilon,2}$ of degree $5$ to a smooth function.

	\begin{minipage}{\textwidth -15pt}
		\begin{center}
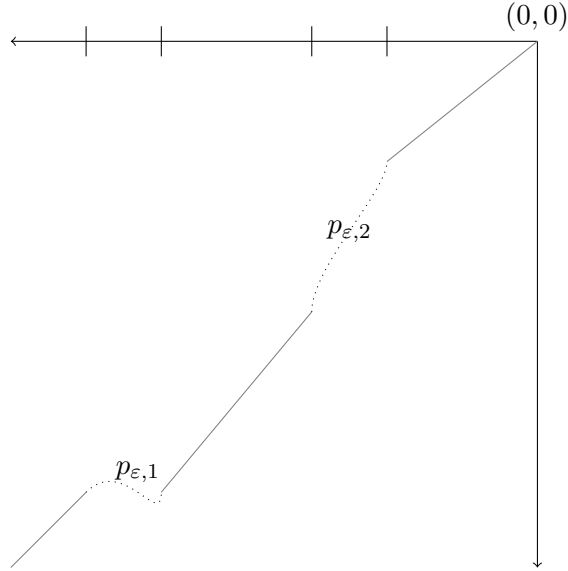

	\begin{tikzpicture}
\draw[->] (0,0) -- (-7,0);
\draw[->] (0,0) -- (0,-7);
\draw (0,0) node[above] {$(0,0)$};
\draw (-2,0.2) -- ++ (0,-0.4) ;
\draw (-3,0.2) -- ++ (0,-0.4) ;
\draw (-5,0.2) -- ++ (0,-0.4) ;
\draw (-6,0.2) -- ++ (0,-0.4) ;
\draw[color=black!50,domain=-2:0] plot (\x,0.8*\x);
\draw[color=black!50,domain=-5:-3] plot (\x,1.2*\x);
\draw[color=black!50,domain=-7:-6] plot (\x,\x);
\draw[dotted] (-6,-6)  .. controls (-5.5,-5.5) and (-5, -6.5) .. node[above]{$p_{\varepsilon,1}$}(-5,-6);
\draw[dotted] (-3,-3.6)  .. controls (-3,-3) and (-2, -2) .. node[align=center]{$p_{\varepsilon,2}$} (-2, -1.6);
\end{tikzpicture}
\captionof{figure}{Extension of  $\varphi_{\varepsilon}$ by polynomials}
		\end{center}
	\end{minipage}

			The conditions (\ref{Cond: Reparemtrization function is C2}) and (\ref{Cond: Reparemtrization function extends identity}) of \autoref{Le: Reparametrization fucntion} are satisfied by construction. Straightforward (although tedious) computation of the polynomials $p_{\varepsilon,1}, p_{\varepsilon,2}$ gives (\ref{Cond: Reparametrization function has universal bound on derivatives}). The full computation can be found in \cite{Nepechiy2018}.
		\end{proof}
	\begin{cor}[Gluing of functions]\label{Cor:Gluing of functions}
		Let $A$ be an $n$-dimensional Alexandrov space and $p\in A$ a point. Fix an arbitrary $\varepsilon>0$ and let $f_1:B_1(p) \rightarrow \mathbb{R}$, $ f_2:B_{\frac{1}{2}}(p) \rightarrow \mathbb{R}$ be $4n$-Lipschitz, $(-2+\varepsilon)$-concave functions satisfying
		\begin{align*}
		-\vert xp \vert^2 \leq f_1(x) \leq  -\left(1-\frac{\varepsilon}{2}\right)\vert xp \vert^2 \quad \text{ for all } x \in B_1(p) \setminus B_{\frac{1}{4}}(p),\\
		-\vert xp \vert^2 \leq f_2(x) \leq  -\left(1-\frac{\varepsilon}{2}\right)\vert xp \vert^2 \quad \text{ for all } x \in B_{\frac{1}{2}}(p) \setminus B_{\frac{1}{8}}(p) .
		\end{align*}
		
		Then there exists a function $F:B_1(p) \rightarrow \mathbb{R}$, which fulfills the following list of conditions:
		\begin{enumerate}
			\item $F$ is locally Lipschitz continuous and $(-2+\varepsilon+\varepsilon \cdot B)$-concave.
			\item The function $F$ satisfies $ F=f_1 \text{ on }  B_1(p) \setminus B_{1-\frac{1}{200}}(p)$.
			\item The function $F$ satisfies $  F= f_2 \text{ on }  B_{\frac{1}{4}+\frac{1}{100}}(p) $.
			\item The function $F$ satisfies
			$$-(1-\varepsilon B ) \vert xp \vert^2 \geq F(x)\geq -(1 +\varepsilon B) \vert x p\vert^2 \text{ for } x \in B_1(p) \setminus B_{\frac{1}{8}}(p),$$
			where in the above $B>0$ is a constant not depending on $\varepsilon$ or the functions $f_1,f_2$.
		\end{enumerate}
	\end{cor}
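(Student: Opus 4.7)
The plan is to define
\[
	F(x) := \begin{cases} \min\bigl\{\varphi_{\varepsilon}(f_1(x)),\, f_2(x)\bigr\}, & x \in B_{1/2}(p), \\ \varphi_{\varepsilon}(f_1(x)), & x \in B_1(p) \setminus B_{1/2}(p), \end{cases}
\]
where $\varphi_{\varepsilon}$ is the reparametrization of \autoref{Le: Reparametrization fucntion}. The three branches of $\varphi_{\varepsilon}$ are calibrated to realise the three zones promised by (2)--(4): the identity branch yields $F = f_1$ on the outer shell, the $(1-\varepsilon)$-scaling branch on values near zero pushes $\varphi_{\varepsilon}(f_1)$ above $f_2$ on the inner ball, and the $1/(1-\varepsilon)$-scaling branch on the middle range forces $\varphi_{\varepsilon}(f_1) \leq f_2$ near $\partial B_{1/2}(p)$, making the piecewise definition continuous along the seam.

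For (2), I first note that on $B_1(p) \setminus B_{1-1/200}(p)$ the sandwich bound yields $f_1 \leq -(1-\varepsilon/2)(1-1/200)^2$, which drops below $-(1-\delta)^2$ for every sufficiently small $\varepsilon$, placing $f_1$ in the identity regime of $\varphi_{\varepsilon}$. For (3), on $B_{1/4+1/100}(p)\setminus B_{1/8}(p)$ one has $f_1(x) \in [-(1/4+\delta)^2, 0]$, so $\varphi_{\varepsilon}(f_1(x)) = (1-\varepsilon)f_1(x) \geq -(1-\varepsilon)|xp|^2 \geq -(1-\varepsilon/2)|xp|^2 \geq f_2(x)$; inside $B_{1/8}(p)$ one extends this via the $(-2+\varepsilon)$-concavity of both functions combined with the annular sandwich, bootstrapping the comparison $\varphi_{\varepsilon}(f_1) \geq f_2$. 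Continuity at $\partial B_{1/2}(p)$ is automatic from the middle branch: for $f_1(x) \in [-(1/2+\delta)^2, -(1/2-\delta)^2]$ one has $\varphi_{\varepsilon}(f_1) = f_1/(1-\varepsilon) \leq -|xp|^2 \leq f_2$, so the minimum already realises $\varphi_{\varepsilon}(f_1)$ in a neighbourhood of the seam.

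The technical heart of the argument is the $(-2 + \varepsilon + \varepsilon B)$-concavity of $\varphi_{\varepsilon} \circ f_1$. Along a unit-speed geodesic $\gamma$, setting $h(t) := f_1(\gamma(t))$, we have $h'' \leq -2+\varepsilon$ distributionally and $|h'| \leq 4n$. Since $\varphi_{\varepsilon}$ is $C^2$, the one-dimensional chain rule (justified for semiconcave $h$ via mollification) gives
\[
	(\varphi_{\varepsilon} \circ h)''(t) = \varphi''_{\varepsilon}(h)(h')^2 + \varphi'_{\varepsilon}(h)\,h''(t) \leq \varepsilon B_0 (4n)^2 + (1-\varepsilon B_0)(-2+\varepsilon) \leq -2 + \varepsilon + \varepsilon B,
\]
with $B := B_0(16n^2 + 2)$, where $B_0$ is the universal bound of \autoref{Le: Reparametrization fucntion}(iii). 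Since $f_2$ is itself $(-2+\varepsilon)$-concave, $F$ inherits the weaker concavity bound as a pointwise minimum, and local Lipschitz continuity is immediate from the uniform bound on $\varphi'_{\varepsilon}$ together with the $4n$-Lipschitz hypotheses on $f_1, f_2$. Finally, (4) follows by plugging the pointwise inequality $(1+\varepsilon B)t \leq \varphi_{\varepsilon}(t) \leq (1-\varepsilon B)t$ (valid for $t \in [-1,0]$) into the sandwich bounds on $f_1$ and $f_2$.

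I expect the main obstacle to be the region-by-region bookkeeping needed to verify $\varphi_{\varepsilon}(f_1) \geq f_2$ uniformly inside $B_{1/8}(p)$, where neither function a priori obeys a pointwise sandwich estimate; the bootstrap from the annular bounds via $(-2+\varepsilon)$-concavity is what forces the inner radius $1/4 + 1/100$ rather than something tighter. All remaining steps reduce to one-dimensional manipulations that the derivative bounds of \autoref{Le: Reparametrization fucntion}(iii) were engineered to absorb.
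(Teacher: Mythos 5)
Your construction differs from the paper's in one decisive place: on the inner ball $B_{\frac{1}{4}}(p)$ you keep $F = \min\{\varphi_{\varepsilon}\circ f_1, f_2\}$, whereas the paper uses a three-branch definition with $F := f_2$ outright on $B_{\frac{1}{4}}(p)$. That third branch is not cosmetic. The sandwich hypothesis on $f_1$ is only assumed on $B_1(p)\setminus B_{\frac{1}{4}}(p)$, so on $B_{\frac{1}{4}}(p)$ you have no control over $f_1$ beyond Lipschitzness and semiconcavity. Your assertion that $f_1(x)\in[-(\tfrac14+\delta)^2,0]$ holds on $B_{\frac14+\frac1{100}}(p)\setminus B_{\frac18}(p)$ already overreaches (it is only justified outside $B_{\frac14}(p)$), and the ``bootstrap'' you invoke for $B_{\frac18}(p)$ does not go through. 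Concretely, the hypotheses permit $f_1(p)$ to be slightly negative and $f_2(p)$ slightly positive (semiconcavity and the annular sandwich allow $|f_i(p)|$ of order $\varepsilon$ with either sign), in which case $\varphi_{\varepsilon}(f_1(p)) < f_2(p)$ and your $F$ fails to equal $f_2$ on $B_{\frac14+\frac1{100}}(p)$ — conclusion (iii) breaks. There is also a domain issue: $\varphi_{\varepsilon}$ is only defined on $[-1,0]$, and with no sandwich on $B_{\frac14}(p)$ the value $f_1(x)$ may be positive there, so $\varphi_{\varepsilon}\circ f_1$ need not even make sense in your middle branch.

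The paper's definition sidesteps both problems: by declaring $F := f_2$ on $B_{\frac14}(p)$, one only ever needs to compare $\varphi_{\varepsilon}\circ f_1$ with $f_2$ on the annulus $B_{\frac12}(p)\setminus B_{\frac14}(p)$, where the sandwich bounds on both $f_1$ and $f_2$ hold, $f_1\in[-\tfrac14,0)$ keeps $\varphi_{\varepsilon}$ in its defined range, and the two brackets of $\varphi_{\varepsilon}$ ((1-\varepsilon)-scaling near $0$ and $\tfrac{1}{1-\varepsilon}$-scaling near $-\tfrac14$) force $\min = f_2$ near $\partial B_{\frac14}(p)$ and $\min = \varphi_{\varepsilon}\circ f_1$ near $\partial B_{\frac12}(p)$, giving continuity at both seams. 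Your concavity computation via the $C^2$ chain rule and the bound $B = B_0(16n^2+2)$, and your verification of (ii) and (iv), match the paper's reasoning and are fine; the gap is entirely in the treatment of $B_{\frac14}(p)$, where a third branch is needed rather than a ``bootstrap.''
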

		\begin{proof}
			Set
				$$F=\begin{cases}
						\varphi_{\varepsilon'} \circ f_1, & x \in B_1(p) \setminus B_{\frac{1}{2}}(p) \\
						\min \lbrace \varphi_{\varepsilon'} \circ f_1 , f_2 \rbrace, & x \in B_{\frac{1}{2}}(p) \setminus B_{\frac{1}{4}}(p) \\
						f_2, &  x \in B_{\frac{1}{4}}(p)
			\end{cases},$$
		for appropriate $\varepsilon'>0$ and $\varphi_{\varepsilon'}$ as in \autoref{Le: Reparametrization fucntion}. By construction of $\varphi_{\varepsilon'}$, namely (\ref{Cond: Reparemtrization function extends identity}) in \autoref{Le: Reparametrization fucntion}, $F$ satisfies conditions (ii)-(iv) in \autoref{Cor:Gluing of functions}. Observe that $F$ coincides with $\varphi_{\varepsilon'} \circ f_1$ in a neighborhood of the boundary of $B_{\frac{1}{2}}(p)$. Thus for any point there exists a small neighborhood such that $F$ is equal to $\varphi_{\varepsilon'} \circ f_1, \min \lbrace \varphi_{\varepsilon'} \circ f_1 , f_2 \rbrace, f_1$ or $f_2$. Clearly each of these functions is locally Lipschitz. Since the minimum of $\lambda$-concave maps is $\lambda$-concave, in order to check $(-2+\varepsilon+\varepsilon \cdot B)$-concavity of $F$, it is sufficient to show concavity for $\varphi_{\varepsilon'} \circ f_1$.\par
		Fix a unit-speed shortest path $\gamma:[a,b] \rightarrow A$. Then $(-2+\varepsilon)$-concavity of $f_1$ is equivalent to the existence of a local smooth support function (compare \cite{article}[Lemma 2.3]), that is for every $x \in [a,b]$ there exists a neighborhood $U_x$ and a twice differentiable function $g:U_x \rightarrow \mathbb{R}$ such that
			$$ f_1(x) = g(x), \quad  f_1(z) \leq g(z), \quad g''(z) \leq -2+\varepsilon \qquad \forall z \in U_x. $$
		Clearly $\varphi_{\varepsilon'} \circ g$ is a local support function for $\varphi_{\varepsilon'} \circ f_1$ satisfying
			\begin{align*}
			(\varphi_{\varepsilon} \circ g(z))''&  =\varphi''_{\varepsilon} \circ g(z) \cdot (g'(z))^2 + \varphi'_{\varepsilon} \circ g(z) \cdot g''(z)\\
			& \leq  \varepsilon {B} \cdot   (g'(z))^2 + (1- \varepsilon \cdot  {B}) \cdot (-2+\varepsilon),
			\end{align*}
			where $B$ is the bound appearing in \autoref{Le: Reparametrization fucntion} and $g'(z)$ is bounded by the Lipschitz constant of $f_1$. This proves the claim.
		\end{proof}
	Now we are ready to give the proof of $P(n,n)$ of \autoref{Proposition}.
	\begin{proof}[Proof of $P(n,n)$]
		Assume we have the situation of \autoref{Corollary: Self-improvement assumption} with $R=1$. Set $F_0:=f_1$ and construct $F_1:= F(1,2):B_1(p) \rightarrow \mathbb{R}$, where $F(1,2)$ is the map coming from \autoref{Cor:Gluing of functions} applied to the maps $f_1,f_2$. Now one can define a map $F_2:B_1(p) \rightarrow \mathbb{R}$ by setting $F_2\vert_{B_1(p) \setminus B_{\frac{1}{2}}(p)}=F_1$ and $F_2 \vert_{B_{\frac{1}{2}}(p)} = F(2,3)$, where $F(2,3)$ is the map coming from \autoref{Cor:Gluing of functions} applied to the maps $f_2,f_3$ after rescaling (notice that here we used that the Lipschitz constant of the $f_i$ scales with the domain of definition).\par
		Inductively one can construct a map $F_{\infty}:B_1(p) \rightarrow \mathbb{R}$ satisfying the conclusions of theorem B. Notice that this map is also $\varepsilon$ constructible, since finitely many gluing steps can be carried out on sufficiently close Alexandrov spaces.
	\end{proof}
		Now it is clear how to obtain theorem A from theorem B: Use theorem B to obtain a sequence of $(-2+\varepsilon_i)$-concave functions $f_i$ for a well chosen sequence $\varepsilon_i$. Multiplying these $f_i$ by a constant $(1+\delta_i)$ for appropriately chosen $\delta_i$, gives a sequence of $(-2)$-concave functions, which can be glued together to a $(-2)$-concave function using the same arguments as above, producing the map promised by theorem A.
\section{Product case}\label{sec: product case}
	The goal of this section is to prove the implication
		$$P(n,k+1) \Rightarrow P(n,k) \text{ for all } 1 \leq k \leq n-1.$$
	By \autoref{Cor: Boundary case} we can assume that $A$ has no boundary.\par
	Suppose the tangent space $T_pA$ at $p$ splits of an $\mathbb{R}^k$ factor, that is $T_pA$ is isometric to $ \mathbb{R}^k \times T_bB$ for some $b \in B$, where $B$ denotes an $(n-k)$-dimensional Alexandrov space.  Consider for sufficiently small $c>0$ the function
		$$f_c: \mathbb{R}^k \times T_bB \rightarrow \mathbb{R}; (x,y) \mapsto  - \frac{1}{2} \cdot \left(  \Vert x  \Vert^2 + \left( \vert o_{b} y \vert_{B} +c \right)^2  \right) .$$
	This function behaves well in the product situation. Meaning that in a small neighborhood of a point $q \in S_1(o_p)$ one can write down a Taylor approximation, which can be lifted to nearby Alexandrov spaces. For points $q \in S_1(o_p)$ not lying in $\mathbb{R}^k \times \lbrace o_b \rbrace$ one can define a map
		$${F}_q:= C + D_{q} + \frac{ (1-\varepsilon)}{2} \cdot f_{q}, $$
	where $C$ is a yet to be determined constant, $D_q$ is a weighted sum of Busemann functions imitating the gradient of $f_c$ and $f_{q}$ is a map coming from $P(n,k+1)$ mimicking the second order behavior of $f_c$. \par
	Obviously ${F}_q$ can be lifted to nearby Alexandrov spaces, since $f_q$ is constructible by $P(n,k+1)$ and Busemann functions
	can be approximated by distance functions of points lying sufficiently far away, which again have canonical lifts.\par
	Let us describe the intermediate goal: For every point $q \in B_{1+\varepsilon}(o_p) \setminus B_{\frac{1}{2}}(p)$ one would like to construct a $(-2+\varepsilon)$-concave and $(1+\varepsilon)$-Lipschitz function $F_q: B_{r_q}(q) \rightarrow \mathbb{R}$ satisfying
		\begin{align}\begin{split}\label{Equation:gluing condition}
			F_q(x) & \geq f_c(x)  \quad \forall x \in B_{r_q}(q) \setminus B_{\frac{2}{3}r_q}(q),\\
			F_q(x) & \leq f_c(x) \quad \forall x \in B_{\frac{1}{10}r_q}(q).		\end{split}
		\end{align}
	Cover a small compact neighborhood $N$ of $S_1(o_p)$ by finitely many $B_{\frac{r_1}{10}}(q_1),\ldots, B_{\frac{r_N}{10}}(q_N)$ and define $F:N \rightarrow \mathbb{R}$ by
		$$F(x) = \min F_i(x) \cdot \mathbbm{1}_i,$$
	where $\mathbbm{1}_i$ denotes the characteristic function for $B_{r_i}(q_i)$, i.e.
		$$ \mathbbm{1}_i(x):= \begin{cases} 1 & \text { if } x \in B_{r_i}(q_i) \\ 0 & \text { if } x \notin B_{r_i}(q_i)  \end{cases}.  $$
	If the $F_i$ would be globally defined, then $F$ would be clearly a $(-2+\varepsilon)$-concave and $(1+\varepsilon)$-Lipschitz function. Using the conditions above one arrives at a similar situation. Namely, fix an arbitrary point $y \in N$ and denote by $I_y$ the subset of $\lbrace 1, \ldots , N\rbrace$ such that $y$ lies in the interior of $B_{r_i}(q_i)$ for every $i \in I_y$. Then only the $i \in I_y$ are relevant for the definition of $F$ in a small neighborhood of $y$. Indeed for an index $j$ in the compliment of $I_y$ one can find $r_y>0$ small enough, such that $y$ lies in the interior of every $B_{r_i}(q_i)$ for $i \in I_y$ and $B_{r_y}(y) \cap B_{r_j}(q_j) \subset B_{r_j}(q_j) \setminus B_{\frac{2}{3}r_j}(q_j)$. One then has
		$$F(x) \leq f_c(x) \leq F_j(x) \quad \forall x \in B_{r_y}(y).$$
	In particular this shows
		$$ F\vert_{B_{r_y}(y)} = \min_{i \in I_y} F_i,$$
	and all $F_i$ are globally defined on $B_{r_y}(y)$, therefore $F\vert_{B_{r_y}(y)}$ is $(-2+\varepsilon)$-concave and $(1+\varepsilon)$-Lipschitz. Since Lipschitz-continuity and concavity are local properties the same is true for $F$. It will turn out in \autoref{subsec: convex region} that, if $F$ is in addition uniformly close to $-\dist_{o_p}^2$, the level set $\lbrace F=1 \rbrace$ bounds a convex region. \par
	The lifts of $F_q$ satisfy inequalities similar as in (\ref{Equation:gluing condition}), thus the arguments above can be repeated verbatim for the lifts. This implies that the function $F$ can be lifted and the level sets of the lift bound convex regions, these are precisely the sets $C_r$ mentioned in the introduction.
	\subsection{First order term}\label{subseq:first order term}
	Let us describe the map ${F}_q$ in more detail. The first step is to explicitly construct the first order term $D_q$. Fix a point $(v,w) \in \mathbb{R}^k \times T_bB$, if $v \neq 0$ there exists a ray $\gamma_1$ starting at $(0,o_{b})$ and going through $(v,o_{b})$. If $w \neq o_{b}$, then there exists a ray $\gamma_2$ starting at $(0,o_{b})$ and going through $(0,w)$. Associate to each ray a Busemann-function $B_1,B_2$ respectively. Straightforward computations in $T_pA$ show
		\begin{align*}
		B_1((x,y))&= -  \Vert x \Vert \cdot  \cos \left( \left   \vert \uparrow_0^x \uparrow_0^v  \right \vert \right), \\
		B_2((x,y))&=- \vert o_{b} y  \vert \cdot \cos \left( \left \vert \uparrow_{o_{b}}^y \uparrow_{o_{b}}^w  \right \vert \right ).
		\end{align*}
		In particular $B_1$ is constant on the second factor $T_bB$ and $B_2$ is constant on the first factor $\mathbb{R}^k$.
	Now define
		$$D_{(v,w)}: \mathbb{R}^k \times T_bB \rightarrow \mathbb{R} ; (x,y) \mapsto \Vert v \Vert B_1(x) + (\vert o_b w \vert + c ) B_2(y), $$
	which is the model gradient for $f_c$. This model gradient can be approximated using only distance functions. More precisely define for $t>0$ maps by
		$$ D^t_{(v,w)} ((x,y)) = \Vert v \Vert \cdot \dist_{\gamma_1(t)}((x,y)) + (\vert o_b w \vert + c ) \cdot \dist_{\gamma_2(t)}((x,y)),$$
	where $\gamma_1, \gamma_2$ are the rays mentioned above.\par
	We will exploit later on that $D^t_{(v,w)}$ is $(1+\varepsilon)$ Lipschitz in a neighborhood of $(v,w)$ if $(v,w)$ is in $S_1(o_p)$ and $c>0$ is sufficiently small. In order to prove this one needs a small lemma.
	\begin{lem}[Noncontracting map, \cite{MR1835418} Proposition 10.6.10, p.374] \label{Le: Noncontracting map}
		Let $A$ be an Alexandrov space of dimension $n$, curvature $\geq \kappa$ and let $p \in A$ be a point. Then there exists a map $f:A \rightarrow \mathbb{R}^n_{\kappa}$, such that $$\vert f(x) f(y) \vert \geq \vert xy \vert \text{ for all } x,y \in A$$ (i.e. $f$ is noncontracting) and $\vert f(p)f(x)\vert = \vert px \vert$ for all $x \in A$.
	\end{lem}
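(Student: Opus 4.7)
The plan is to proceed by induction on the dimension $n$ of $A$. The base case $n=0$ is trivial ($A$ is a point), and $n=1$ reduces to an explicit signed arc-length construction into $\mathbb{R}^1_\kappa$: for an interval or a line this is immediate, and for a circle one checks directly that ``unwrapping'' to a segment is noncontracting because the circular distance is always $\min(a+b, L-(a+b))\le a+b$.

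For the inductive step, I would exploit the fact that the space of directions $\Sigma_p$ is a compact Alexandrov space of dimension $n-1$ with curvature $\geq 1$. Applying the inductive hypothesis to $\Sigma_p$ with $\kappa = 1$ yields a noncontracting map $\phi:\Sigma_p \to \mathbb{R}^{n-1}_1 = S^{n-1}$. Fix $o\in \mathbb{R}^n_\kappa$ and define
\[
f:A \to \mathbb{R}^n_\kappa, \qquad f(x):=\exp_o\bigl(|px|\cdot \phi(\uparrow_p^x)\bigr),
\]
choosing an arbitrary representative in $\Uparrow_p^x$ when the direction is non-unique. When $\kappa \leq 0$ the exponential is globally defined; when $\kappa > 0$, Bonnet--Myers bounds $\operatorname{diam}(A) \leq \pi/\sqrt\kappa$, placing the image inside the injectivity region of $\exp_o$ (with the evident convention at the antipode). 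The identity $|f(p)f(x)| = |px|$ is immediate from the construction.

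For the noncontracting property I would use a three-step chain. The distance $|f(x)f(y)|_{\mathbb{R}^n_\kappa}$ is determined via the $\kappa$-law of cosines by $|px|$, $|py|$, and the angle at $o$, which equals $|\phi(\uparrow_p^x)\phi(\uparrow_p^y)|_{S^{n-1}}$. Noncontractiveness of $\phi$ gives this angle $\geq |\uparrow_p^x \uparrow_p^y|_{\Sigma_p}$; the Toponogov angle comparison (the defining property of curvature $\geq \kappa$) makes the latter $\geq \tilde{\angle}_\kappa(x,p,y)$; and monotonicity of the $\kappa$-law of cosines in the angle variable then yields $|f(x)f(y)|_{\mathbb{R}^n_\kappa} \geq |xy|_A$, as desired.

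The main obstacle I expect is justifying that the standard subtleties---non-uniqueness of $\Uparrow_p^x$, the cut locus of $\exp_o$ when $\kappa>0$, and the fact that $f$ is not claimed to be continuous---do not spoil the estimate. Each is in fact benign: every choice of direction in $\Uparrow_p^x$ satisfies the angle inequality, so the choice is irrelevant; points at distance $\pi/\sqrt\kappa$ from $p$ (if any) all map to the antipode of $o$ and trivially satisfy the noncontracting inequality; and the conclusion is purely pointwise. The substantive work is therefore confined to the induction, which terminates at the dimension-zero case.
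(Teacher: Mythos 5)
The paper cites this lemma to Burago--Burago--Ivanov (Proposition~10.6.10) and does not reproduce a proof; your argument is, up to minor presentational choices, exactly the proof given there. The induction on dimension through the space of directions $\Sigma_p$, combined with Toponogov angle comparison and monotonicity of the $\kappa$-law of cosines in the angle variable, is the standard route, and your handling of the degeneracies (non-unique directions, the antipode when $\kappa>0$, and the fact that $f$ need not be continuous) is correct.
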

	\begin{lem}
		Let $q=(v,w) \in \mathbb{R}^k \times T_bB = T_pA$ be a point with $w \neq 0$ 
		Then one has that $D^t_{(v,w)}$ is $\sqrt{\Vert v \Vert^2  + (\vert o_b w \vert+ c)^2}$-Lipschitz for all $t>0$.
	\end{lem}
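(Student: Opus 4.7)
The plan is to reduce the slope estimate to an explicit Euclidean calculation via the Noncontracting Map Lemma, and then invoke Thales' theorem. First, apply the Noncontracting Map Lemma to $T_bB$, which has curvature $\geq 0$ as a tangent cone, with base $o_b$: this yields a noncontracting map $g\colon T_bB\to\R^{n-k}$ with $\Vert g(y)\Vert=\vert o_by\vert$; a standard uniqueness argument then shows that $g$ is isometric on every geodesic ray emanating from $o_b$. Combining with the identity on $\R^k$, the product map $F=\mathrm{id}\times g\colon T_pA\to\R^n$ is noncontracting, sends $\gamma_1(t)\mapsto P_1:=(v_1,0)$ and $\gamma_2(t)\mapsto P_2:=(0,W)$ with $\Vert P_1\Vert=\Vert P_2\Vert=t$ and $\langle P_1,P_2\rangle=0$, and by the Pythagorean product formula satisfies
\[
\dist_{\gamma_1(t)}((x,y))=\sqrt{\Vert x-v_1\Vert^2+\vert o_by\vert^2}=\Vert F(x,y)-P_1\Vert,
\]
together with exact distance preservation to points on the ray $\gamma_2$.

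The key Euclidean observation is that the auxiliary function $\widetilde D^t(X)=a\Vert X-P_1\Vert+b\Vert X-P_2\Vert$ on $\R^n$, with $a=\Vert v\Vert$ and $b=\vert o_bw\vert+c$, has gradient of squared magnitude $a^2+b^2+2ab\cos\theta(X)$, where $\theta(X)$ is the angle at $X$ in the triangle $P_1XP_2$. Thales' theorem gives $\cos\theta(X)\leq 0$ precisely when $X$ lies in the closed ball with diameter $\overline{P_1P_2}$, and a direct algebraic check using $\Vert P_1\Vert=\Vert P_2\Vert=t$ shows that $F(v,w)$ sits in this ball in the regime relevant to the application. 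Hence $\widetilde D^t$ is $\sqrt{a^2+b^2}$-Lipschitz on a neighborhood of $F(v,w)$.

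Finally, transfer this Euclidean bound to $D^t_{(v,w)}$ intrinsically on $T_pA$ by a first-variation argument. For a unit-speed geodesic $\sigma(s)=(\sigma_1(s),\sigma_2(s))$ with $\Vert\dot\sigma_1\Vert^2+\Vert\dot\sigma_2\Vert^2=1$, write $\phi_i(s)=\dist_{\gamma_i(t)}(\sigma(s))=\sqrt{U_i(s)^2+S_i(s)^2}$ using the Pythagorean forms of the two distance functions, differentiate using first-variation in each factor, and rearrange $a\dot\phi_1+b\dot\phi_2$ as an inner product of $(\dot\sigma_1,\dot\sigma_2)$ against a gradient-like vector built from the directions to $\gamma_1(t)$ and $\gamma_2(t)$ in each factor. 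The main obstacle is that a noncontracting map does not in general pull back Lipschitz bounds; the saving grace is that $F$ is exactly distance-preserving to $\gamma_1(t)$ and isometric on $\gamma_2$, so the squared norm of the intrinsic gradient-like vector matches the Euclidean expression $a^2+b^2+2ab\cos\theta$ with $\theta$ inherited from the triangle $P_1F(\sigma(s))P_2$, and Cauchy--Schwarz together with the Thales region yields $\vert a\dot\phi_1+b\dot\phi_2\vert\leq\sqrt{a^2+b^2}$.
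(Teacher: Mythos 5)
Your approach is genuinely different from the paper's. The paper computes the directional derivative, applies Cauchy--Schwarz to get the factor $\bigl(\sum_{i=1}^2\cos^2\bigr)^{1/2}$, embeds $\gamma_1(t),\gamma_2(t)$ into a $(k+1,\delta)$-strainer at $q$, pushes the resulting explosion in $\Sigma_x$ into $S^{n-1}$ via the Noncontracting Map Lemma, completes it to an $(n,\delta)$-explosion, and invokes the BGP Lemma (\autoref{Lemma:BGP}) to bound $\sum\cos^2$ by $1+\varepsilon(\delta)$. You instead reduce to the Euclidean obtuse-angle/Thales picture and avoid building a strainer. In spirit this works and is arguably cleaner for this isolated lemma (the strainer machinery in the paper is then re-used for the $H_1+H_2$ estimate in \autoref{Le: Weak second order approximation with lift}, which is part of why the paper sets it up that way), but your write-up has real gaps in the transfer step.

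Concretely: (1) The claim that $g$ is isometric on rays from $o_b$ is not a consequence of \autoref{Le: Noncontracting map} as stated; it holds if you take $g$ to be the \emph{radial extension} of a noncontracting map $\Sigma_b\to S^{n-k-1}$, which is a particular construction you need to invoke explicitly. (2) The sentence claiming the ``squared norm of the intrinsic gradient-like vector matches the Euclidean expression $a^2+b^2+2ab\cos\theta$ with $\theta$ inherited from the triangle $P_1F(\sigma(s))P_2$'' is incorrect: there is no such matching. What is actually true is an inequality chain. One has $\vert x\gamma_1(t)\vert=\Vert Fx-P_1\Vert$, $\vert\gamma_1(t)\gamma_2(t)\vert=\Vert P_1-P_2\Vert$, but only $\vert x\gamma_2(t)\vert\leq\Vert Fx-P_2\Vert$; the Thales/obtuse condition in the Euclidean picture therefore forces the \emph{comparison} angle $\tilde\theta$ at $x$ to be $\geq\pi/2$ (smaller middle side makes the opposite comparison angle larger), and Toponogov then forces the intrinsic angle $\theta\geq\tilde\theta\geq\pi/2$. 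These inequalities happen to go the right way, but the argument must be run this way, not by equating angles. (3) Even granting $\theta\geq\pi/2$ in $\Sigma_x$, passing from that to $\vert a\cos\alpha_1+b\cos\alpha_2\vert\leq\sqrt{a^2+b^2}$ for an arbitrary test direction $\uparrow$ is exactly the step where the obstacle you named is resolved, and it requires a \emph{second} application of the Noncontracting Map Lemma, this time on $\Sigma_x$ with basepoint $\uparrow$: one maps $\Sigma_x\to S^{n-1}$ preserving the angles $\alpha_1,\alpha_2$ to $\uparrow$ while only increasing $\theta$, and then the Euclidean inner-product identity applies. Your proposal treats the final step as a near-automatic Cauchy--Schwarz, but this is precisely the nontrivial content; once you spell it out, you are using the same main tool as the paper, just with the Thales condition in place of the $(n,\delta)$-explosion.

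Minor remarks: the Thales condition $\Vert v\Vert^2+\vert o_bw\vert^2\leq t(\Vert v\Vert+\vert o_bw\vert)$ requires $t$ to be large (roughly $t\geq 1$ when $q\in S_1(o_p)$), which is consistent with the paper's own ``for sufficiently large $t$'' but not with the lemma's literal ``for all $t>0$''; and since the obtuseness at nearby $x\neq q$ is only approximate, you will, like the paper, pick up an $\varepsilon(R)$-error in the Lipschitz constant on a small ball $B_R(q)$, which is what the surrounding text actually uses.
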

		\begin{proof}
			In order to prove the assertion for $D^{t}_{(v,w)}$ it is sufficient to bound their directional derivatives at all points $x \in B_R(q)$. For $x \in B_R(q)$ and $\uparrow \in \Sigma_{x}$ the directional derivative of $D^{t}_{(v,w)}$ at $x$ is given by
			\begin{align*}
			(D^{t}_{(v,w)})'_{x}(\uparrow)&= -\Vert v \Vert \cos\left(  \left \vert \Uparrow_{x}^{\gamma_1(t)} \uparrow \right \vert \right)  -(\vert o_b w \vert + c) \cos\left( \left \vert \Uparrow_{x}^{\gamma_2(t)} \uparrow \right \vert \right ) \\
			&= \left \langle \begin{pmatrix}
			\Vert v \Vert \\
			\vert o_b w \vert + c
			\end{pmatrix},
			\begin{pmatrix}
			-\cos\left( \left \vert \Uparrow_{x}^{\gamma_1(t)} \uparrow \right \vert \right)  \\
			-\cos\left( \left \vert \Uparrow_{x}^{\gamma_2(t)} \uparrow \right \vert \right)
			\end{pmatrix} \right \rangle .
			\end{align*}
			Using Cauchy-Schwartz inequality one gets
			$$(D^{t}_{(v,w)})'_{x}(\uparrow) \leq \sqrt{\Vert v \Vert ^2 + (\vert o_bw \vert +c)^2}  \cdot \sqrt{\sum_{i=1}^2 \cos^2\left( \left \vert \Uparrow_{x}^{\gamma_i(t)} \uparrow \right \vert \right)}. $$
			For $q=(v,w) \in \mathbb{R}^k \times T_bB \setminus \mathbb{R}^k \times \lbrace o_b \rbrace$ set $a_1:=\gamma_1(t) \in \mathbb{R}^k \times \lbrace w \rbrace$ for some sufficiently large $t>0$. Find points $a_2,\ldots, a_k, b_1, \ldots b_k \in \mathbb{R}^k \times \lbrace w \rbrace$ at distance $\vert a_1 q \vert$ from $q$ such that $\lbrace(a_i,b_i)\rbrace_{i=1}^k$ is a $(k,0)$-strainer at $q$.\par
		Since $q$ does not lie in the $\mathbb{R}^k$-factor, the ray $\gamma_2$ exists. Set $a_{k+1}:=\gamma_2(t)$ for some sufficiently large $t>0$ and find $b_{k+1}$ such that $\lbrace(a_i,b_i)\rbrace_{i=1}^{k+1}$ is a $(k+1,0)$-strainer at $q$. Again this is possible, since $q \notin \mathbb{R}^k \times \lbrace o_b \rbrace$.\par
		For every $\delta>0$ there exists $R>0$ such that $\lbrace (a_i,b_i)\rbrace_{i=1}^{k+1}$ is a $(k+1,\delta)$-strainer for all points in $B_R(q)$. The constant $\delta$ is assumed to be small and will be specified below. Applying \autoref{Le: Noncontracting map} for $x \in B_R(q)$ and $\uparrow \in \Sigma_x$ there exists a noncontracting map $E: \Sigma_{x} \rightarrow S^{n-1}$ satisfying $\vert E(\uparrow) E(\uparrow')\vert=\vert \uparrow \uparrow' \vert $ for all $\uparrow' \in \Sigma_x$.\par
	The $(k+1,\delta)$-strainer $\lbrace(a_i,b_i)\rbrace_{i=1}^{k+1}$ induces an $(k+1,\delta)$-explosion $(\Uparrow_x^{a_i},\Uparrow_x^{b_i})_{i=1}^{k+1}$ in $\Sigma_x$ in the sense of \autoref{Def: (n,d) explosion}. Since $E$ is noncontracting, it maps this $(k+1,\delta)$-explosion in $\Sigma_x$ to a $(k+1,\delta)$-explosion in $S^{n-1}$. Denote this explosion by $(A_i,B_i)_{i=1}^{k+1}$.\par

	In $S^{n-1}$ there is a canonical way to extend the $(k+1,\delta)$-explosion to an $(n,\delta)$-explosion $(A_i,B_i)_{i=1}^n$. Then \autoref{Lemma:BGP} implies
\begin{align*}
\sum_{i=1}^2 \cos^2\left( \left \vert \Uparrow_{x}^{\gamma_i(t)} \uparrow \right \vert \right) & = \sum_{i=1}^2 \cos^2\left( \left \vert A_i E(\uparrow) \right \vert \right)
 \leq \sum_{i=1}^{n} \cos^2\left( \left \vert A_i E(\uparrow) \right \vert \right) \leq 1 + \varepsilon(\delta),
\end{align*}
which proves the result.
		\end{proof}
\subsection{Second order approximation}\label{subseq second order approx}
The tangent space at the point 
	$$(v,w) = q \in B_{1+\varepsilon}(o_p)\setminus \mathbb{R}^k \times \lbrace o_b \rbrace$$ satisfies $T_q(T_pA)=\mathbb{R}^{k+1} \times T_{\overline{b}}\overline{B}$ by the splitting theorem \cite{MR0256327}, since it is lying in the interior of $k+1$ pairwise orthogonal shortest paths. Therefore, one can use the statement $P(n,k+1)$ to find $r_q>0$ and a function $f_{\delta}:B_{r_q}((v,w))\rightarrow \mathbb{R}$ for $\delta:= \varepsilon/1000$ as in \autoref{Main Proposition}. Consider the function ${F}_q: B_{r_q}((v,w))\rightarrow \mathbb{R}$ defined by the formula
$$ {F}_q((x,y)):= C + D_{(v,w)}((x,y)) + \frac{ (1-\varepsilon)}{2} \cdot f_{\delta}((x,y)),$$
where $D_{(v,w)}$ is defined in \autoref{subseq:first order term} and the constant $C$ is given by
$$ C:= \frac{\Vert v \Vert^2 +(\vert o_b w \vert + c ) \vert o_bw \vert - c^2}{2} - \frac{ r_q^2 \varepsilon}{9} .$$
One has for all $(x,y) \in B_{r_q}((v,w)) \setminus B_{\frac{2}{3} r_q}((v,w))$
\begin{equation*}\label{Eq: Second order approximation inequality}
{F}_q((x,y)) \geq f_c((x,y)) + \frac{ r_q^2 \varepsilon}{9}.
\end{equation*}
Indeed observe the identities
$$ B_1(x) - \frac{\Vert x - v \Vert^2}{2 \Vert v \Vert} +  \frac{\Vert v \vert^2}{2\Vert v \Vert} = - \frac{\Vert x \Vert^2}{2 \Vert v \Vert}, B_2(y) - \frac{\vert wy \vert^2  }{2 \vert o_b w \vert}+\frac{\vert o_b w \vert^2 }{2 \vert o_b w \vert}=-\frac{\vert o_b y \vert^2}{2 \vert o_b w \vert}. $$
By construction $f_{\delta}((x,y)) \geq -\vert (x,y) (v,w)\vert ^2 =- \Vert x-v \Vert^2 - \vert wy \vert^2  $ and thus with the above
\begin{align*}
{F}_q((x,y)) & \geq C + \Vert v \Vert B_1(x) + (\vert o_b w \vert + c ) B_2(y) - \frac{ \Vert x-v \Vert^2 + \vert wy \vert^2 }{2} \\
& \phantom{\geq} + \frac{\varepsilon \cdot  \vert (x,y) (v,w) \vert^2}{2} \\
& = \Vert v \Vert  \left(B_1(x) - \frac{\Vert x - v \Vert^2}{2 \Vert v \Vert} +  \frac{\Vert v \Vert^2}{2\Vert v \Vert}  \right)  \\
& \phantom{=} + (\vert o_b w \vert +c) \left( B_2(y) - \frac{\vert wy \vert^2  }{2 \vert o_b w \vert}+\frac{\vert o_b w \vert^2 }{2 \vert o_b w \vert} \right) + \frac{c \vert wy \vert^2}{2 \vert o_b w \vert} \\
&\phantom{=}- \frac{c^2}{2} + \frac{\varepsilon \cdot  \vert (x,y) (v,w) \vert^2}{2}- \frac{2 r_q^2 \varepsilon}{9}\\
& \geq - \frac{\vert (x,y) (0,o_b) \vert^2}{2} - \frac{c \vert o_b y \vert^2}{ 2\vert o_b w \vert}-\frac{c^2}{2} +  \frac{\varepsilon \cdot  \vert (x,y) (v,w) \vert^2}{2}- \frac{ r_q^2 \varepsilon}{9}.
\end{align*}
If $y$ is sufficiently close to $w$, one has $ -{c \vert o_b y \vert^2} /2{\vert o_b w \vert} \geq - c \vert o_b y \vert $, therefore if $r_q$ is sufficiently small, using the above one obtains
\begin{equation}\label{Eq: Preequation for proving result in Proposition in product case}
{F}_q((x,y)) \geq f_c((x,y)) +  \frac{\varepsilon \cdot  \vert (x,y) (v,w) \vert^2}{2}- \frac{ r_q^2 \varepsilon}{9}
\end{equation}
for all $(x,y) \in B_{r_q}((v,w)) \setminus B_{\frac{2 r_q}{3}}((v,w))$.\par

Now observe the following: \begin{enumerate}
	\item The arguments in \autoref{subseq:first order term} carry over verbatim to the lifts of $F_q$, which are defined in a canonical way, thus $F_q$ and its lifts are $(1+\varepsilon)$-Lipschitz functions.
	\item  Moreover $F_q$ and its lifts will be $(-2+\varepsilon)$ concave by construction. 
	\item Clearly $F_q$ is uniformly close to $f_c$ and thus to $-\dist_{o_p}^2$ if $r_q$ is sufficiently small, additionally one has $F_q < f_c$ in a small neighborhood around $q$ and the same is true for close lifts, since $F(q)<f_c(q)$.
Therefore we are in the situation described at the paragraphs preceding \autoref{subseq:first order term}. It remains to show that a similar situation can be achieved at points in the $\mathbb{R}^k$-factor.
\end{enumerate} 
\subsection{Second order approximation in the $\mathbb{R}^k$-factor}\label{subseq: second order with factor}
	For points $q \in \mathbb{R}^k \times \lbrace o_b \rbrace$ one cannot apply $P(n,k+1)$ to get a second order approximation function $f_{\delta}$ as in \autoref{subseq second order approx}. Now the reason for the introduction of the functions $f_c$ becomes clear:\par At points in the $\mathbb{R}^k$ factor it is possible to use weaker functions (in the sense that they do not satisfy the lower bound given in theorem B) to approximate $f_c$ up to second order such that (\ref{Equation:gluing condition}) is satisfied. More precisely one needs:
	\begin{lem}[Weak second order approximation]\label{Le: Weak second order approximation with lift}
	For every $q = (v,o_b) \in \mathbb{R}^k \times \lbrace o_b \rbrace \subset T_pA$ and every $\varepsilon>0$ there exists $R>0$ and a map $H:B_R(q) \subset T_pA \rightarrow \mathbb{R}$ satisfying the following list of conditions:
		\begin{enumerate}
		\item The functions $H$ are $\varepsilon$-Lipschitz and $(-2+\varepsilon)$-concave on their domain of definition.
		\item The function $H$ satisfies $H(q)=0$ and $\nabla_{q}H=o_q$.
		\item The function $H$ satisfies for all $(x,y)\in B_R((v,o_b)) \subset \mathbb{R}^k \times T_bB$
		\begin{equation}\label{Eq: Lower bound for weak second order approximation}
		H((x,y)) \geq - {\Vert x-v \Vert^2} - \varepsilon \cdot \vert o_b y \vert - \varepsilon^2 \cdot \vert (x,y) (v,o_b) \vert^2.
		\end{equation}
	\end{enumerate}
\end{lem}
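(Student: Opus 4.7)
The strategy follows the sketch given in the introduction. I would construct $H$ as a sum
$$H(x,y) := H_1(x) + H_2(y),$$
with $H_1 \colon B_{R_1}(v) \subset \mathbb{R}^k \to \mathbb{R}$ and $H_2 \colon B_{R_2}(o_b) \subset T_bB \to \mathbb{R}$ each depending on only one factor, and $R := \min(R_1, R_2)$ chosen small enough that $B_R(q) \subset B_{R_1}(v) \times B_{R_2}(o_b)$. For $H_1$ I use the function produced by the already established statement $P(k,k)$ of \autoref{Main Proposition}, applied to the regular point $v$ in the Euclidean Alexandrov space $\mathbb{R}^k$; this yields a $(-2+\varepsilon/2)$-concave function on $B_{R_1}(v)$ with $H_1(v)=0$, an isolated maximum at $v$, and the pointwise bound $H_1(x) \geq -\|x-v\|^2$. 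For $H_2$ I invoke Kapovitch's strictly-concave-function construction from \cite{MR1904560} at $o_b \in T_bB$, which produces a $(-2+\varepsilon/2)$-concave function with $H_2(o_b)=0$ and isolated maximum at $o_b$.

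The key observation is that by shrinking $R_1, R_2$ one can make both Lipschitz constants arbitrarily small: for a $(-2+\varepsilon)$-concave function with an isolated interior maximum at which the gradient vanishes, the Lipschitz constant on a ball of radius $r$ is of order $r$. I therefore pick $R_1, R_2 \leq \varepsilon/8$ so that each $H_i$ is $(\varepsilon/2)$-Lipschitz, whence $H$ is $\varepsilon$-Lipschitz. Concavity of the sum is handled by a standard product argument: every unit-speed shortest path in $\mathbb{R}^k \times T_bB$ decomposes as $\gamma(t) = (\gamma_1(t), \gamma_2(t))$ with constant factor speeds $s_1, s_2 \geq 0$ obeying $s_1^2 + s_2^2 = 1$. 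Reparametrising each $\gamma_i$ to unit speed $\tilde\gamma_i$ and using $s_1^2 + s_2^2 = 1$, one finds
$$H \circ \gamma(t) - \tfrac{-2+\varepsilon/2}{2} t^2 = \sum_{i=1}^{2}\left[ H_i \circ \tilde\gamma_i(s_i t) - \tfrac{-2+\varepsilon/2}{2}(s_i t)^2 \right],$$
which is concave as a sum of concave functions. Hence $H$ is $(-2+\varepsilon/2)$-concave and in particular $(-2+\varepsilon)$-concave. The identity $H(q) = 0$ is immediate, and $\nabla_q H = o_q$ because $q$ is the product maximum of $H_1 + H_2$.

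For the lower bound in (\ref{Eq: Lower bound for weak second order approximation}), the property of $H_1$ directly gives $H_1(x) \geq -\|x-v\|^2$, while $H_2(o_b)=0$ combined with the Lipschitz bound on $H_2$ yields $H_2(y) \geq -\tfrac{\varepsilon}{2}|o_b y| \geq -\varepsilon|o_b y|$. Summing these two inequalities produces exactly the required bound, with the term $-\varepsilon^2 |(x,y)(v,o_b)|^2$ being slack. The main technical obstacle I expect is producing $H_2$ on a ball of prescribed small radius around $o_b$ with both the correct concavity constant $-2+\varepsilon/2$ and a simultaneously controlled Lipschitz constant; once Kapovitch's min-of-reparametrised-distances construction has been calibrated to give this, the verification of all four requirements of the lemma reduces to the bookkeeping above.
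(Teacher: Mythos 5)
Your factor-wise ansatz $H(x,y)=H_1(x)+H_2(y)$ is internally consistent and, once the Lipschitz calibration for $H_2$ you flag at the end is carried out, it would establish the lemma as literally stated; the product-geodesic decomposition and the reparametrization identity you use for concavity are correct. But this is a genuinely different route from the paper's, and it discards the design constraint the lemma is built around, namely that $H$ must lift to nearby Alexandrov spaces (this is what drives the whole of \autoref{sec: product case}, see \autoref{Def: Pseudo-constructible} and the sentence immediately after the paper's proof: ``the arguments \ldots carry over almost verbatim to the canonically defined lifts''). The paper's $H_1$ and $H_{2,i}$ are \emph{not} functions of one factor: they are sums $\sum\varphi_R\circ\dist_{(p_i^{\pm},o_b)}$ and $\frac{K}{N_i}\sum_\alpha\varphi_R\circ\dist_{(0,q_\alpha^i)}$, where the distances are taken in the full product $\mathbb{R}^k\times T_bB$, so each term depends on both $x$ and $y$. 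That choice costs something: the product-geodesic splitting no longer applies, and the concavity estimate instead runs through inequality~(\ref{Eq: Product concavtiy lemma}), the $(n,\delta)$-explosion bound of \autoref{Lemma:BGP}, and a noncontracting map $S^{k-1}\ast\Sigma_b\to S^{k-1}\ast S^{n-k-1}$ from \autoref{Le: Noncontracting map}. What it buys is that $H$ lifts canonically: replace the finitely many reference points by their Hausdorff lifts in $A_i$. Your $H_1(x)+H_2(y)$ has no such lift, since a nearby non-product space admits neither factor projections nor a product-geodesic decomposition, so both your formula and your concavity argument fail to transfer. Interestingly, your $H_1\circ\pi_1+H_2\circ\pi_2$ does appear in the paper's proof, but only as a comparison function: the paper shows $|H-(H_1\circ\pi_1+H_2\circ\pi_2)|\le\varepsilon^2\,|(x,y)(0,o_b)|^2$ and uses the factor-wise estimates to deduce~(\ref{Eq: Lower bound for weak second order approximation}). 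One further caution: your blanket claim that a $(-2+\varepsilon)$-concave function with an isolated interior maximum is $O(r)$-Lipschitz on $B_r$ is false without a matching lower bound of order $r^2$ (e.g.\ $-|t|-t^2$ is $(-2)$-concave with max at $0$ but has Lipschitz constant $\approx 1$ near $0$); this lower bound is supplied by $P(k,k)$ for $H_1$, while for $H_2$ it is exactly the calibration you acknowledge must be done, and the paper bakes it in by choosing $\varphi_R$ with $\varphi_R'(1)=R$.
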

	How to obtain these weaker functions? On the $\mathbb{R}^k$-factor one can use the construction coming from \autoref{subseq Model constrution}. On the $T_bB$-factor the construction of Kapovitch given in \cite{MR1904560} will be used. Both constructions yield functions defined in terms of distance functions of a finite number of points, in particular the definition of the maps $H_1 :\mathbb{R}^k \rightarrow \mathbb{R}$, and $ H_2:T_bB \rightarrow \mathbb{R}$ a priori defined on each factor separately can be canonically extended to the product $\mathbb{R}^k \times T_bB$.\par
	We want to make precise what the functions $H_1,H_2$ look like, we call the map $H_1$ the flat-factor term and $H_2$ the cone-factor term.
\begin{dfn}[Flat-factor term]
Denote by $\lbrace(p_i^+,p_i^-)\rbrace_{i=1}^k$ a $(k,0)$-strainer around $0 \in \mathbb{R}^k$, where all points have distance one to $0 \in \mathbb{R}^k$. Then $\lbrace (p_i^+,o_b),(p_i^-,o_b) \rbrace_{i=1}^k$ is a $(k,0)$-strainer around $o_p \in T_pA$. Consider for $R>0$ the real valued function
$$\varphi_{R}: \mathbb{R} \rightarrow \mathbb{R}; z \mapsto  \frac{-(1+R-z)^2 + R}{2}$$
and define the map $H_1^{R}:\mathbb{R}^k \times T_bB \rightarrow \mathbb{R}$ by
\begin{equation}
H_{1}^{R}((x,y))=  \sum_{i=1}^{k} \varphi_R \circ \dist_{(p_i^+,o_b)}((x,y)) + \varphi_R \circ \dist_{(p^-_i,o_b)}((x,y)).
\end{equation}
The map $H_1^{R}$ is called the flat-factor term of the second order approximation. To unburden notation the index $R$ will be omitted.
\end{dfn}

	\begin{dfn}[Cone-factor term]\label{Def: Cone factor term}
	Write the tangent space $T_bB$ as $T_bB = \operatorname{Cone}(\Sigma_{b})$. Fix an $0.1$-net $\lbrace q_1 , \ldots, q_{N} \rbrace$ in $\Sigma_b$. For each $ i \in \lbrace 1, \ldots , N \rbrace$ consider the set $B_{0.1}(q_i) \subset \Sigma_b$ and find for $\delta>0$ a maximal $\delta$-separated set $\lbrace q^i_{1}, \ldots , q^i_{N_i} \rbrace \subset B_{0.1}(q_i) \subset \Sigma_b$ (i.e. $\vert q_k^i q _l^i \vert \geq \delta$ for all $1 \leq k \neq l \leq N_i$). Define for $K>0$ the map
	$$ H_{2,i}^{R,K,\delta}: B_R(o_p) \subset \mathbb{R}^k \times T_bB \rightarrow \mathbb{R}; (x,y) \mapsto \frac{K}{N_i} \cdot \sum_{\alpha=1}^{N_i} \varphi_{R} \circ \dist_{(0,q^i_{\alpha})}((x,y)) . $$
	To unburden notation the indices $R,\delta ,K$ are omitted. Define $H_{2}: B_R(o_p) \subset \mathbb{R}^k \times T_bB \rightarrow \mathbb{R}$ by
	\begin{equation}
	H_{2}((x,y))= \min_{1 \leq i \leq N} H_{2,i}((x,y)).
	\end{equation}
	The map $H_{2}$ is called the cone-factor term of the second order approximation with parameters $R,K,\delta$.
\end{dfn}

Lets start with the proof of \autoref{Le: Weak second order approximation with lift}.

\begin{proof}[Proof of \autoref{Le: Weak second order approximation with lift}]
	It is sufficient to prove that $H_1 + H_{2,i}$ is $(-2+\varepsilon)$-concave and $\varepsilon$-Lipschitz on $B_R(o_p)$ (for the definition of $H_{2,i}$ see \autoref{Def: Cone factor term}). \par 
	All terms in the definition of $H_1$ and $H_{2,i}$ are $\varepsilon$-Lipschitz, if the parameters are chosen appropriately, therefore $H$ is also $\varepsilon$-Lipschitz.\par
	In order to prove $(-2+\varepsilon)$-concavity of $H_i:=H_1 + H_{2,i}$ similar to the proof of \autoref{Lem: Concavity of lifts} it is sufficient to show
	$$ 2 H_i(m) - H_i(x) - H_i(y) \geq   \left(1-\frac{\varepsilon}{2} \right) \frac{\vert xy \vert ^2 }{2} $$
	for any $x,y \in B_R(o_p)$ and every midpoint $m$ between $x$ and $y$. Recall from the proof of \autoref{Lem: Concavity of lifts}: If $R>0$ is sufficiently small, then the above expression $2 H_i(m) - H_i(x) - H_i(y)$ is bounded below by (up to terms $\varepsilon  \vert xy \vert^2$)
	\begin{align*}
	\sum_{i=1}^k \left[ \cos^2(\vert \uparrow_m^x \uparrow_m^{(p_i^+,o_b)} \vert) +\cos^2(\vert \uparrow_m^y \uparrow_m^{(p_i^+,o_b)} \vert)  \right] \frac{\vert xy \vert^2}{8}  \\
	+\sum_{i=1}^k \left[ \cos^2(\vert \uparrow_m^x \uparrow_m^{(p_i^-,o_b)} \vert) +\cos^2(\vert \uparrow_m^y \uparrow_m^{(p_i^-,o_b)} \vert)  \right] \frac{\vert xy \vert^2}{8} \\
	+ \frac{K}{N_i}\sum_{i=1}^{N_i} \left[ \cos^2( \vert \uparrow_m^x \uparrow_m^{(0,q_i)} \vert ) + \cos^2( \vert \uparrow_m^y \uparrow_m^{(0,q_i)} \vert )  \right] \frac{\vert xy \vert^2}{8}.
	\end{align*}
	The first four terms are similar in their nature. Namely one computes the distance of some direction in the space of directions to an $(k,\delta')$-explosion (see \autoref{Def: (n,d) explosion}), where $\delta' \rightarrow 0 $ if $R \rightarrow 0$. Thus it is sufficient to show for arbitrary $\uparrow \in \Sigma_m$ the inequality
	\begin{align}\label{Eq: Product concavtiy lemma}
	\sum_{i=1}^k \cos^2(\vert \uparrow \uparrow_m^{(p_i^+,o_b)} \vert)+ \frac{K}{2 N_i}\sum_{i=1}^{N_i}  \cos^2( \vert \uparrow \uparrow_m^{(0,q_i)} \vert ) \geq \left(1-\frac{\varepsilon}{4} \right) .
	\end{align}
	Observe that the statement is satisfied in the model situation, that is if $T_pA= \mathbb{R}^k \times \mathbb{R}^{n-k}$, then the tangent space of $T_pA$ is given by the spherical join $S^{k-1} \ast S^{n-k-1}$. If $\uparrow$ lies in the $S^{k-1}$-factor the inequality is true by the computations made in \autoref{sec: reg case} for the first term, by continuity the same remains true for all $\uparrow$ lying in a small neighborhood around $S^{k-1}$.\par
	Analogously if $\uparrow$ lies in $S^{n-k-1}$ the inequality is also true, since the second term satisfies a much stronger inequality. More precisely, for dimensional reasons most of the summands of the second term will be bigger than some fixed bound, by choosing the parameters appropriately the second term can be made as large, as one wants it to be. In particular one can achieve that the second term satisfies the inequality for all $\uparrow$ lying outside a small neighborhood of $S^{k-1}$ (a more detailed description of this argument is given in \cite{Nepechiy2018} and \cite{MR1904560}).\par
	The general case follows from \autoref{Le: Noncontracting map}. Recall that the space of directions of the tangent space of $T_pA= \mathbb{R}^k \times T_bB$ is given by the spherical join $S^{k-1} \ast \Sigma_{b}$, where $\Sigma_{b}$ denotes the space of directions of $T_bB$. For $\uparrow$ one can construct a noncontracting map $f: S^{k-1} \ast \Sigma_{b} \rightarrow S^{k-1} \ast S^{n-k-1}$, such that all distances to $\uparrow$ are preserved. Thus one is in the model situation. \par
	Since the map is noncontracting, it maps points in $S^{k-1}$ to points in $S^{k-1}$ and points in $\Sigma_b$ to points in $S^{n-k-1}$. Moreover an $(k,\delta)$-strainer in $S^{k-1}$ will be mapped to a $(k,\delta)$-strainer. The dimensional argument mentioned for the model situation is also true for $\Sigma_{b}$ and by the noncontracting properties carries over to the model situation. This proves concavity of $H_i$ and thus also concavity of $H$. \par
	The validity of (\ref{Eq: Lower bound for weak second order approximation}) can be seen the following way: Consider instead of $H_1$ the map $H_1 \circ \pi_1$, where $\pi_1$ is the projection on the first factor
	$$\pi_1 : \mathbb{R}^k \times T_bB \rightarrow \mathbb{R}^k \times \lbrace o_b \rbrace; (x,y) \mapsto (x,o_b)$$
	and instead of $H_{2,i}$ the map $H_{2,i} \circ \pi_2$, where $\pi_2$ is the projection on the second factor
	$$\pi_2: \mathbb{R}^k \times T_bB \rightarrow \lbrace 0 \rbrace \times T_bB; (x,y) \mapsto (0,y).$$
	Using the same computations as in \autoref{sec: reg case} one gets $H_1 \circ \pi_1((x,y)) \geq - \Vert x \Vert^2$.\par
	Set $H_2 \circ \pi_2 := \min_i H_{2,i} \circ \pi_2$, then $H_2 \circ \pi_2$ restricted to $T_bB$ is obviously $\varepsilon$-Lipschitz as the composition of $\varepsilon$-Lipschitz maps, thus together with $H_2 \circ \pi_2((0,o_b))=0$ one has $H_2\circ \pi_2 ((x,y)) \geq - \varepsilon  \vert o_b y \vert$. Observe by making $R$ sufficiently small one has
	$$\vert H((x,y)) - (H_1 \circ \pi_1 ((x,y)) +H_2 \circ \pi_2 ((x,y))) \vert \leq \varepsilon^2 \cdot \vert (x,y) (0,o_b)\vert^2 .$$
	Indeed this just follows from the definition of product metric and straightforward computations.
	Combining these estimates shows (\ref{Eq: Lower bound for weak second order approximation}) and finishes the proof.
\end{proof}

	Observe that the arguments used in the proof of \autoref{Le: Weak second order approximation with lift} carry over almost verbatim to the canonically defined lifts. Thus we arrive at the situation described at the beginning of \autoref{sec: product case}. To summarize what was achieved a new definition is necessary. 
	\begin{dfn}[Pseudo-constructible]\label{Def: Pseudo-constructible}
		Fix $\varepsilon>0$ and let $p \in A$ be a point in an $n$-dimensional Alexandrov space. Denote by $T_pA$ the tangent space at $p$ and by $o_p$ its apex. Assume
		$$F_{\varepsilon}:B_{1+\varepsilon}(o_p) \setminus \overline{B}_{\frac{1}{2}}(o_p) \subset T_pA \rightarrow \mathbb{R}$$
		is a $(1+\varepsilon)$-Lipschitz and $(-1+\varepsilon)$-concave function satisfying
		$$ -(1-\varepsilon^2) \frac{\vert x o_p \vert^2}{2}  \geq  F_{\varepsilon}(x) \geq - (1+\varepsilon^2) \frac{\vert x o_p \vert^2}{2} \text { for all } x \in B_{1+\varepsilon}(o_p) \setminus \overline{B}_{\frac{1}{2}}(o_p). $$
		The function $F_{\varepsilon}$ is called pseudo-constructible if for any pointed sequence $(A_i^n,p_i)\rightarrow (T_pA,o_p)$ of $n$-dimensional Alexandrov spaces $A_i^n \in \operatorname{Alex}(\kappa_i)$ for some sequence $\kappa_i \rightarrow 0$ for $i \rightarrow \infty$, there exists $N \in \mathbb{N}$ such that for all $i \geq N$ there is a $(1+\varepsilon)$-Lipschitz, $(-1+\varepsilon)$-concave function
		
		$$F_{\varepsilon,i}:B_{1+\varepsilon}(p_i) \setminus \overline{B}_{\frac{1}{2}}(p_i) \subset A^n_i \rightarrow \mathbb{R},$$
		which satisfies
		$$ -(1-2\varepsilon^2) \frac{\vert x p_i \vert^2}{2}  \geq  F_{\varepsilon,i}(x) \geq - (1+2\varepsilon^2)\frac{\vert x p_i \vert^2}{2}  \text { for all } x \in B_{1+\varepsilon}(p_i) \setminus \overline{B}_{\frac{1}{2}}(p_i).$$
	\end{dfn}

Combining everything from \autoref{sec: product case} so far yields \autoref{Prop: Construction of defining function}. The idea is: We have obtained a pseudo-constructible function, from there one can with some effort obtain a constructible one in the sense of \autoref{Definition: Constructible}. The last part of \autoref{sec: product case} will explain how to achieve exactly that. \par

\begin{lem}[Construction of convex region function]\label{Prop: Construction of defining function}
	Let $A$ be an $n$-dimensional Alexandrov space without boundary and let $p \in A$ be a point. Fix an arbitrary $\varepsilon>0$. Denote by $T_pA$ the tangent space at $p$.\par
	Then for every $\varepsilon >0$ there exists a function
	$$F_{\varepsilon}:B_{1+\varepsilon}(o_p) \setminus \overline{B}_{\frac{1}{2}}(o_p) \subset T_pA \rightarrow \mathbb{R}$$
	satisfying the following conditions:
	\begin{enumerate}
		\item \label{Property1: in pseudo constructible proposition} The function $F_{\varepsilon}$ is $(1+\varepsilon)$-Lipschitz and $(-1+\varepsilon)$-concave on its domain of definition.
		\item \label{Property2: in pseudo constructible proposition} The function $F_{\varepsilon}$ satisfies
		$$ -(1-\varepsilon^3) \frac{\vert x o_p \vert^2}{2}  \geq  F_{\varepsilon}(x) \geq - (1+\varepsilon^3)\frac{\vert x o_p \vert^2}{2}  \text { for } x \in B_{1+\varepsilon}(o_p) \setminus \overline{B}_{\frac{1}{2}}(o_p). $$
		In particular this implies for the Hausdorff-distance $d_H$
		$$d_H\left(S_1(o_p),\lbrace F_{\varepsilon}(x) = -1/2  \rbrace \right) \leq 2 \varepsilon^3. $$
		\item The function $F_{\varepsilon}$ is pseudo-constructible in the sense of \autoref{Def: Pseudo-constructible}.
	\end{enumerate}
\end{lem}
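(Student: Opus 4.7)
The plan is to assemble $F_\varepsilon$ as a pointwise minimum, over a finite open covering of the annulus, of local approximations of the auxiliary function $f_c$ from \autoref{subseq second order approx}. The shift parameter $c>0$ will be chosen small enough that $f_c$ is $O(\varepsilon^3)$-close to $-\tfrac12\dist_{o_p}^2$ on the region of interest, so that any approximation of $f_c$ automatically verifies property (\ref{Property2: in pseudo constructible proposition}).

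For each point $q=(v,w)$ in the closed annulus I build a local model $F_q\colon B_{r_q}(q)\to\mathbb{R}$ as follows. If $w\neq o_b$, then by the splitting theorem $T_q(T_pA)$ has an additional $\mathbb{R}$-factor and $P(n,k+1)$ applies, so I take $F_q=C+D_q+\tfrac{1-\varepsilon}{2}f_\delta$ exactly as in \autoref{subseq second order approx}; the Lipschitz bound from \autoref{subseq:first order term} combined with the $(-2+\delta)$-concavity of $f_\delta$ yields a $(1+\varepsilon)$-Lipschitz, $(-1+O(\varepsilon))$-concave function satisfying the strict comparisons $F_q\geq f_c+\tfrac{r_q^2\varepsilon}{9}$ on $B_{r_q}(q)\setminus B_{2r_q/3}(q)$ and $F_q<f_c$ on $B_{r_q/10}(q)$. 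If $w=o_b$, the induction hypothesis is unavailable at $q$, so I replace $f_\delta$ by the weaker approximation $H=H_1+H_2$ from \autoref{Le: Weak second order approximation with lift}, setting $F_q=C+D_q+H$; the $c$-shift in the definition of $f_c$ is precisely what allows the unsharp term $-\varepsilon|o_by|$ in (\ref{Eq: Lower bound for weak second order approximation}) to be absorbed into the slack $\tfrac{c|wy|^2}{2|o_bw|}$ of (\ref{Eq: Preequation for proving result in Proposition in product case}), restoring the same pair of one-sided comparisons.

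By compactness I extract a finite subcover $\{B_{r_i/10}(q_i)\}_{i=1}^N$ with $F_i=F_{q_i}$ defined on $B_{r_i}(q_i)$, and define
\[F_\varepsilon(x)=\min_{1\leq i\leq N}F_i(x)\cdot\mathbbm{1}_{B_{r_i}(q_i)}(x).\]
For any $y$ in the annulus, let $I_y=\{i:y\in B_{r_i}(q_i)\}$. For every $j\notin I_y$, continuity together with the strict gluing inequalities gives $F_j\geq f_c\geq F_i$ for all $i\in I_y$ on a sufficiently small neighborhood of $y$, so the $j$-th summand in the minimum is irrelevant there. Hence $F_\varepsilon$ locally coincides with the genuine finite minimum $\min_{i\in I_y}F_i$ of functions all defined on an open neighborhood of $y$, and thus inherits $(1+\varepsilon)$-Lipschitzness together with $(-1+\varepsilon)$-concavity. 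Property (\ref{Property2: in pseudo constructible proposition}) then follows from the uniform closeness of each $F_i$ to $f_c$ (for small $r_i$) plus the closeness of $f_c$ to $-\tfrac12\dist_{o_p}^2$ (for small $c$), both controllable to $O(\varepsilon^3)$.

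Finally, for pseudo-constructibility I lift each building block along Hausdorff approximations $\theta_r$: the distance-function summands in $D_q$, $H_1$, and $H_2$ lift canonically as in \autoref{subseq Model constrution}, while $f_\delta$ lifts by the $\varepsilon$-constructibility part of $P(n,k+1)$. Because the comparison inequalities used in the gluing are strict with the definite margin $r_q^2\varepsilon/9$, they persist for all sufficiently close lifts, and the finite-minimum recipe produces lifts $F_{\varepsilon,i}$ verifying \autoref{Def: Pseudo-constructible} with the slightly weaker constants $2\varepsilon^2$ in place of $\varepsilon^3$. The main obstacle throughout is the uniform control at points $q\in\mathbb{R}^k\times\{o_b\}$, where $P(n,k+1)$ is genuinely unavailable; the success of the whole argument hinges on having replaced $-\tfrac12\dist_{o_p}^2$ by $f_c$, which is what creates the slack needed to absorb the weaker first-order error from (\ref{Eq: Lower bound for weak second order approximation}) uniformly in $q$.
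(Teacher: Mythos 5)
Your proposal faithfully reconstructs the argument the paper intends: the paper gives no standalone proof of this lemma, instead declaring that combining the preceding parts of \autoref{sec: product case} yields it, and your assembly --- local models $F_q=C+D_q+\tfrac{1-\varepsilon}{2}f_\delta$ via $P(n,k+1)$ off the $\mathbb{R}^k$-factor, the weak approximation $H=H_1+H_2$ from \autoref{Le: Weak second order approximation with lift} on it, the characteristic-function-weighted minimum over a finite cover of the annulus, and canonical lifts for pseudo-constructibility --- is exactly that chain. One pointer error worth fixing: the slack term $\tfrac{c\vert wy\vert^2}{2\vert o_b w\vert}$ you cite from (\ref{Eq: Preequation for proving result in Proposition in product case}) is only derived for $w\neq o_b$ (its denominator vanishes at $w=o_b$); for $q\in\mathbb{R}^k\times\{o_b\}$ the absorption mechanism works, but via the nonvanishing first-order coefficient $-c$ of $f_c$ in the cone direction at such $q$, not via that particular formula.
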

	\subsection{The convex region}\label{subsec: convex region}
	Consider the function $F_{\varepsilon}$ coming from \autoref{Prop: Construction of defining function}. Notice that the level set $\lbrace F_{\varepsilon} = -1/2 \rbrace$ bounds a convex region. \par
	Indeed the set $C_{\varepsilon}:=\lbrace F_{\varepsilon} \geq -1/2 \rbrace \cup B_{\frac{1}{2}}(o_p)$ by construction of $F_{\varepsilon}$ is Hausdorff close to $B_1(o_p)$. For arbitrary $x,y \in C_{\varepsilon}$ one has: If $d(x,y) \leq \varepsilon/4$, then the whole shortest path between $x$ and $y$ can not leave $C_{\varepsilon}$. Either it is contained in the domain of definition of $F_{\varepsilon}$, then the claim follows by concavity or one of the points lies in $B_{1/2}(o_p)$ but then the shortest path cannot leave for example $B_{3/4}(o_p)$ and is again in $C_{\varepsilon}$.\par
	Now assume $d(x,y) \geq  2 \cdot  \varepsilon/4$. If $m$ is a midpoint between $x$ and $y$ the distance in the model  $d( m, o_p)$ can be computed from the Euclidean situation, since $T_pA$ is an Euclidean cone. In particular one obtains $  d( m, o_p) \leq 1-2\varepsilon^3$ and therefore $m \in C_{\varepsilon}$. By induction it follows that for every $x,y, \in C_{\varepsilon}$ all shortest paths between $x$ and $y$ are contained in $C_{\varepsilon}$.\par
	Observe that this argument is, with the obvious modifications, also applicable for the lifts of $F_{\varepsilon}$. Now the Set $C_{\varepsilon}$ is convex with nonempty Alexandrov boundary. The next step is to show that the distance function from $\partial C_{\varepsilon}$ is more concave than the distance function from the sphere in the Euclidean situation.\par
	In \cite{MR2408266}[Thm. 3.3.1] it was proven that $\dist_{\partial A}$ is a concave function given that $\partial A \neq \emptyset$ and $A$ has curvature $\geq 0$. The idea of this proof is to compare $\dist_{\partial A}$ along a geodesic with a suitable comparison situation. Our problem is very similar in nature, one has additional assumptions on the boundary and wants to obtain a stronger concavity result for $\dist_{\partial C_{\varepsilon}}$. The way to do it is to construct a more adapted comparison situation, it will be described in \autoref{Def: Comparison for convex sets}. After that the proof for the concavity estimates will be carried out in \autoref{Prop: concavity estimates}.
\begin{dfn}[Model halfspace]\label{def: model halfspace}
	Denote by $\mathbb{R}^2_{\kappa}$ the $\kappa$-plane, i.e. the two-dimensional simply connected space form with constant curvature $\kappa$. Denote by $\mathbb{R}^{2+}_{\kappa}$ the model halfspace of the $\kappa$-plane that is the upper hemisphere in $S^2_{\kappa}$, the upper half-plane in $\mathbb{R}^2$ and the right quadrant in the upper half space model of the hyperbolic space $\mathbb{H}^2_{\kappa}$.
\end{dfn}

\begin{dfn}[Comparison for convex sets]\label{Def: Comparison for convex sets}
	Let $C$ be an Alexandrov space of dimension $n$ with curvature $\geq \kappa$, $\partial C \neq \emptyset$ and $\gamma$ a unit-speed shortest path with $\gamma(0) \in C \setminus \partial C$. Denote by $p \in \partial C$ the nearest point to $\gamma(0)$. Let $\alpha$ be the angle between $\gamma$ and the unique shortest path from $\gamma(0)$ to $p$, i.e. $\alpha:= \sphericalangle(\uparrow_{\gamma(0)}^{\gamma(t)}, \uparrow_{\gamma(0)}^p)$.\par
	Fix $\varepsilon>0$ and assume $\dist_{\partial C}(\gamma(0))<(1+\varepsilon)/(1-\varepsilon)$. Construct a comparison situation in the model halfspace: \par
	Choose $\overline{\gamma}(0) \in \mathbb{R}^{2+}_{\kappa} \setminus \partial \mathbb{R}^{2+}_{\kappa}$ satisfying $\vert \overline{\gamma}(0) \partial \mathbb{R}^{2+}_{\kappa} \vert= \vert p \gamma(0) \vert$, denote by $\overline{p}$ the closest point on $\partial \mathbb{R}^{2+}_{\kappa}$ to $\overline{\gamma}(0)$. Fix a unit-speed shortest path $\overline{\gamma}$ starting at $\overline{\gamma}(0)$ satisfying $\sphericalangle(\uparrow_{\overline{\gamma}(0)}^{\overline{p}}, \uparrow_{\overline{\gamma}(0)}^{\overline{\gamma}(t)})=\alpha$. Find the point $\overline{M}=\overline{M}(\varepsilon)$ with $\vert \overline{M} \overline{p} \vert = (1+\varepsilon)/(1-\varepsilon)=R(\varepsilon)=: R$, such that $\overline{\gamma}(0)$ lies on the shortest path between $\overline{M}$ and $\overline{p}$.\par
	Such a configuration is called $\varepsilon$-comparison for the convex set $C$ and the unit speed shortest path $\gamma$.
\end{dfn}

\begin{prp}[Concavity estimates]\label{Prop: concavity estimates}
	Let $A$ be an Alexandrov space of dimension $n$, with lower curvature bound $\geq \kappa$ and without boundary. Fix $\varepsilon>0$. Let $C \subset A$ be convex, compact, with $\partial C \neq \emptyset$ and $\mathring{C} \neq \emptyset$. Assume moreover that for each $b \in \partial C$ there exits an $r>0$ and a function $F_b : B_{r}(b) \rightarrow \mathbb{R}$ such that
	\begin{enumerate}
		\item \label{Property 1 from Concavity estimate Proposition} The function $F_b:B_{r}(b) \rightarrow \mathbb{R}$ is $(1+\varepsilon)$-Lipschitz and $(-1+\varepsilon)$-concave on $B_{r}(b)$.
		\item \label{Property 2 from Concavity estimate Proposition} One has $F_b(q) = -1/2$ if $q \in B_{r}(b) \cap \partial C_{} $ and $F_b(q) > -1/2 $ if $q \in B_{r}(b) \cap \mathring{C}$.
		\item \label{Property 3 from Concavity estimate Proposition}
		One has $\dist_{\partial C}(x) < R= R(\varepsilon):=\frac{1+\varepsilon}{1-\varepsilon}$ for all $x \in C$.
	\end{enumerate}
	By (\ref{Property 3 from Concavity estimate Proposition}) for a unit-speed shortest path $\gamma$ satisfying $\gamma(0) \in C \setminus \partial C$ the $\varepsilon$-comparison for the convex set $C$ as in \autoref{Def: Comparison for convex sets} is defined.\par
	Then the following inequality holds
	$$\dist(\partial C_{}, \gamma(t) )  \leq  \begin{cases} \dist(\partial B_{R}(\overline{M}), \overline{\gamma}(t)) +o(t^2) &, \kappa\geq 0 \\  \dist(\partial B_{R}(\overline{M}), \overline{\gamma}(t)) +o(t^2)
	+ T(\varepsilon,\kappa)  \cdot O(t^2) &,\kappa<0
	\end{cases}, $$
	where $T(\varepsilon,\kappa)$ is given by
	$$ T(\varepsilon, \kappa):= \sqrt{\vert \kappa \vert} \cdot \coth(\sqrt{ \vert \kappa \vert  R(\varepsilon)})-\frac{1}{R(\varepsilon)}. $$
	In particular if $\kappa \geq 0$, the distance function from $\partial C_{}$ is more concave than the distance function from $\partial B_{R}(\overline{M})$ in the Euclidean plane.\par
\end{prp}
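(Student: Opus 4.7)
The plan is to adapt the proof of \cite{MR2408266}[Thm.~3.3.1] for the concavity of $\dist_{\partial A}$, replacing the ``flat boundary'' half-space model $\partial \mathbb{R}^{2+}_\kappa$ by the ``round boundary'' $\partial B_R(\overline{M})$ prescribed in \autoref{Def: Comparison for convex sets}. The extra structure encoded by the local support function $F_b$ along $\partial C$ is exactly what allows this upgrade: in a smooth heuristic, $(1+\varepsilon)$-Lipschitz combined with $(-1+\varepsilon)$-concavity forces the principal curvatures of the level set $\{F_b=-1/2\}$ to be at least $(1-\varepsilon)/(1+\varepsilon)=1/R$, so $\partial C$ is at least as curved as a sphere of radius $R$.

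First I fix $\gamma$ with $\gamma(0)\in C\setminus\partial C$, pick $p\in\partial C$ realizing $\dist_{\partial C}(\gamma(0))$ and a shortest path $\sigma$ from $\gamma(0)$ to $p$. For small $t>0$ I choose a closest point $q(t)\in\partial C$ to $\gamma(t)$; upper semicontinuity of the foot-point map forces $q(t)\to p$, so for $t$ small both $q(t)$ and $\gamma(t)$ lie inside the domain $B_r(p)$ of the function $F_p$ supplied by the hypothesis. Applying the Toponogov hinge comparison at $\gamma(0)$ to the hinge $(\sigma,\gamma|_{[0,t]})$ in $A$ reduces the estimate on $|\gamma(t)\,q(t)|$ to a model-space quantity in $\mathbb{R}^2_\kappa$ together with a correction depending on $\angle(\uparrow_p^{\gamma(0)},\uparrow_p^{q(t)})$ and $|p\,q(t)|$.

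The crucial new input is a quantitative refinement of the first variation inequality $\angle(\uparrow_p^{\gamma(0)},\uparrow_p^{q(t)})\le \pi/2$: I claim
\[\cos\angle(\uparrow_p^{\gamma(0)},\uparrow_p^{q(t)})\;\geq\;\frac{|p\,q(t)|}{2R}+o(|p\,q(t)|).\]
This follows by evaluating the $(-1+\varepsilon)$-concavity of $F_p$ on the hinge $(\uparrow_p^{\gamma(0)},\uparrow_p^{q(t)})$ and combining with the boundary identity $F_p(p)=F_p(q(t))=-1/2$: the descending-gradient component of $F_p$ along $\uparrow_p^{\gamma(0)}$ at $p$ must be at least $(1-\varepsilon)/2\cdot |p\,q(t)|+o(|p\,q(t)|)$, and dividing by the Lipschitz bound $1+\varepsilon$ on $|\nabla F_p|$ yields the announced cosine estimate with $R=(1+\varepsilon)/(1-\varepsilon)$. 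A direct computation in the Euclidean plane verifies that this is precisely the angle excess produced at the base point by a boundary of curvature $1/R$, which is why the model of \autoref{Def: Comparison for convex sets} is the natural target.

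Plugging this refined angle bound into the Toponogov comparison and comparing with the model hinge of \autoref{Def: Comparison for convex sets} in $\mathbb{R}^{2+}_\kappa$, a second order expansion of the $\kappa$-law of cosines then gives $\dist(\partial C,\gamma(t))\le \dist(\partial B_R(\overline{M}),\overline{\gamma}(t))+o(t^2)$ when $\kappa\ge 0$. For $\kappa<0$ the hyperbolic law of cosines differs from its Euclidean analogue at scale $R$ by exactly $\sqrt{|\kappa|}\coth(\sqrt{|\kappa|}R)-1/R=T(\varepsilon,\kappa)$ to leading order, which propagates through the comparison as the announced $T(\varepsilon,\kappa)\cdot O(t^2)$ correction. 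I expect the main obstacle to be the angle-excess estimate above: turning the quadratic strict concavity of $F_p$ into a quantitatively sharp first-order excess of the angle at $p$ beyond $\pi/2$ in the Alexandrov setting, where $F_p$ is not smooth, and doing so with error $o(|p\,q(t)|)$ so that the final statement retains its $o(t^2)$ precision.
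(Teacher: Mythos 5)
Your refined angle estimate at the foot point $p$ is the right second-order quantity to extract from $F_b$ (modulo the unaddressed point that one must first argue $\nabla_p F_p$ is parallel to $\uparrow_p^{\gamma(0)}$, which does follow from the vanishing of $d_pF_p$ in directions tangent to $\partial C$ together with concavity of the differential). However, the step where you ``plug this into Toponogov'' is where the argument breaks, and this is not the obstacle you flag — it is a more structural gap. Applying the hinge comparison at $\gamma(0)$ to the hinge $(\sigma,\gamma|_{[0,t]})$ controls $|p\,\gamma(t)|$ from above, not $|q(t)\,\gamma(t)|=\dist(\partial C,\gamma(t))$. The chain $\dist(\partial C,\gamma(t))\le |p\,\gamma(t)|\le |\overline{p}\,\overline{\gamma}(t)|$ loses exactly the second-order gain you need: in the model one has, by the triangle inequality for $\overline{M},\overline{\gamma}(t),\overline{p}$, that $|\overline{p}\,\overline{\gamma}(t)|\ge R-|\overline{M}\,\overline{\gamma}(t)|=\dist(\partial B_R(\overline{M}),\overline{\gamma}(t))$, with a gap that is $O(t^2)$ for $\alpha\neq 0$. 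So the naive hinge bound at $\gamma(0)$ is strictly weaker than the claimed inequality. Trying instead to use the hinge at $p$ with legs $pq(t)$ and $p\gamma(t)$, where your angle bound lives, only yields a \emph{lower} bound on $|q(t)\,\gamma(t)|$ — again the wrong direction. Your proposal never exhibits a competitor boundary point near $\gamma(t)$, and comparison geometry alone does not turn the angle excess at $p$ into the required upper bound on the distance to the nearest point $q(t)$.

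The paper closes this gap by a different mechanism, following Petrunin's template: launch a quasi-geodesic $q(\cdot)$ from $p$ in the tangential direction $(0,v/\|v\|)$ of the splitting $T_pA=\mathbb{R}^+\times T_p\partial C$, and use the angle monotonicity of quasi-geodesics to compare $|q(\lambda t)\,\gamma(t)|$ with the corresponding model quantity $|g(\lambda t)\,\overline{\gamma}(t)|$. The new ingredient supplied by $F_b$ is a \emph{penetration-depth} estimate: since $F_b$ is $(-1+\varepsilon)$-concave and has vanishing tangential derivative, $F_b(q(\lambda t))\le -1/2-\tfrac{1-\varepsilon}{2}\lambda^2t^2$, so $q(\lambda t)$ exits $C$; the point $b(\lambda t)\in\partial C$ on the segment from $q(\lambda t)$ to $\gamma(t)$ then satisfies $|b(\lambda t)\,q(\lambda t)|\ge \lambda^2t^2/(2R)$ by the Lipschitz bound. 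Subtracting this from $|q(\lambda t)\,\gamma(t)|$ produces exactly the extra second-order decrease and, crucially, $b(\lambda t)$ is an explicit boundary point competing with $q(t)$, so $\dist(\partial C,\gamma(t))\le |b(\lambda t)\,\gamma(t)|$ gives the needed upper bound. Your angle-excess estimate and the paper's penetration-depth estimate encode the same quadratic information from $F_b$, but turning the former into an upper bound on $\dist(\partial C,\gamma(t))$ seems to require the same quasi-geodesic machinery; without it, or a substitute device producing a nearby boundary point, the argument does not close.
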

	\begin{proof}
		As in \cite{MR2408266}[Thm. 3.3.1] the tangent space at $p$ splits as $T_pA = \mathbb{R}^+ \times T_p \partial C$. Therefore for the direction $\uparrow$ of an arbitrary shortest path from $p$ to $\gamma(t)$ one can write $\uparrow =(s,v) \in \mathbb{R}^+ \times T_p$. Consider the quasi-geodesic $q(t)$ in $A$  starting at $p$ and going in the direction $(0,v/\Vert v \Vert)$ (it exists by \autoref{Thm: Existence of Quasigeodesics}). Petrunin obtained a bound for the angle $\sphericalangle (\uparrow_p^{\gamma(t)},(0,v))$ in terms of the angle in the comparison situation, that is $\sphericalangle (\uparrow_{\overline{p}}^{\overline{\gamma}(t)},  \uparrow_{\overline{p}}^{\overline{q}})$, where $q$ denotes the direction of a geodesic $g(t)$ in the comparison situation starting in $\overline{p}$ and staying in $\partial \mathbb{R}^{2+}_{\kappa}$. More precisely one has
			$$ \sphericalangle ( \uparrow_{p}^{\gamma(t)}, (0,v)) \leq  \sphericalangle( \uparrow_{\overline{p}}^{\overline{\gamma}(t)}, \uparrow_{\overline{p}}^{\overline{q}})+ o(t). $$
		Using the angle monotonicity property of quasigeodesics one obtains for every $\tau>0$ an upper bound on $\vert q(t) \gamma(\tau) \vert$, which is of the form
			$$  \vert q(t) \gamma(\tau) \vert \leq \vert g(t) \overline{\gamma}(\tau) \vert + o(\tau) \cdot O(t).$$
		
		In particular one can choose $\lambda(t, \alpha, R, \kappa)$ such that the point $\overline{\gamma}(t)$ lies on a shortest path between $\overline{M}$ and $g(\lambda \cdot t)$. The above then implies the first of the two key estimates
			$$  \vert q( \lambda \cdot t) \gamma(t) \vert \leq \vert g( \lambda \cdot t) \overline{\gamma}(t) \vert + o(t^2).$$
		Since $F_b$ is $(-1+\varepsilon)$-concave and the directional derivative of $F_b$ in the direction of $(0,v)$ vanishes one has by (\ref{Property 2 from Concavity estimate Proposition}) of $F_b$ that $q(\lambda t)$ lies outside of $C$. Thus there exists a point in the boundary of $C$ denoted by $b(\lambda t)$ (in particular $F(b(\lambda(t)))=F(p)$), which lies on a shortest path between $q(\lambda \cdot t)$ and $ \gamma(t)$. Since $F_b$ is $(1+\varepsilon)$-Lipschitz, one has
			$$\frac{1-\varepsilon}{2}\cdot \lambda^2 \cdot  t^2 \leq  F(b(\lambda t)) - F(q( \lambda t)) \leq (1+\varepsilon) \cdot   \vert b(\lambda t) q(\lambda t) \vert .$$
		With this one can estimate $\dist_{\partial C}(\gamma(t))$ from above.
				\begin{align}\label{Eq: Concavity estimate for convex set last equation}
\begin{split}
\vert \gamma(t) \partial C \vert  & \leq \vert b(\lambda t) \gamma(t) \vert 
 = \vert q(\lambda t) \gamma(t) \vert - \vert b(\lambda t) q(\lambda t) \vert  \\
& \leq \vert g(\lambda t) \overline{\gamma}(t) \vert -\frac{1}{2R}\cdot \lambda^2  t^2  + o(t^2) 
= \vert \overline{M} g(\lambda t) \vert - \vert \overline{M} \overline{\gamma}(t) \vert -\frac{1}{2R}\cdot \lambda^2  t^2  + o(t^2) \\
\end{split}
\end{align}
		Using $\vert \overline{M} \overline{\gamma}(t) \vert = R - \dist(\partial B_R(\overline{M}), \overline{\gamma}(t))$ and the Taylor approximation of $\dist_{\overline{M}}$ at $p$ one obtains
				$$ \dist(\partial B_R(\overline{M}), \overline{\gamma}(t)) + \left(\operatorname{Hess}^{\kappa,\perp}_{\overline{p}}(\dist_{\overline{M}}(x)) - \frac{1}{R} \right) \cdot   \frac{ \lambda^2 t^2}{2}  + o(t^2),  $$
		where $ \operatorname{Hess}^{\kappa,\perp}_{\overline{p}}(\dist_{\overline{M}}(x))$ denotes the Hessian of $f(x)= \dist_{\overline{M}}(x)$ in the comparison space $\mathbb{R}_k^2$ at the point $\overline{p}$ in a direction perpendicular to $\uparrow_{\overline{p}}^{\overline{\gamma}(0)}$. This finishes the proof.
	\end{proof}
	Using \autoref{Prop: concavity estimates} one can immediately conclude $P(n,k)$. For this just consider a function if the form
		$$f_{C_{\delta}}:  \mathring{C}_{\delta} \rightarrow \mathbb{R}; x \mapsto -  (R(\delta) - \vert x \partial C_{\delta} \vert )^2. $$
	From \autoref{Prop: concavity estimates} one can, for given $\varepsilon>0$, deduce $(-2+\varepsilon)$ concavity of $f_{C_{\delta}}$, for sufficiently small $\delta>0$. Together with \autoref{Prop: Construction of defining function} one arrives at the situation of \autoref{Corollary: Self-improvement assumption}. From here one finishes the proof by repeating the self-improvement procedure described in \autoref{subsec:Improvement}.
\subsection{Final step}\label{sec: final step}
		In the final step one can assume that $P(n,1)$ is true. At any point $q \in T_pA \setminus \lbrace o_p \rbrace$ the tangent space $T_q T_pA$ splits off an $\mathbb{R}$-factor. The approach is similar to \autoref{sec: product case}, only instead of approximating $f_c$ one approximates $-\dist_{o_p}^2$.  For this define $F_q: B_{r_q}(q) \rightarrow \mathbb{R}$ by
		\begin{equation}\label{Eq: Definition of local defining function}
		F_q(x) := \vert q o_p\vert \cdot  B_q(x) + \frac{f_q^{\varepsilon}(x)}{2}+ \varepsilon  \vert xq \vert^2  -\frac{r_q^2 \cdot \varepsilon}{4}+ \frac{\vert q o_p \vert^2}{2},
		\end{equation}
		where $B_q(x)$ denotes the Busemann-function associated to the ray starting at $o_p$ and going through $q$ and $f_q^{\varepsilon}$ is the function coming from $P(n,1)$. One easily sees that $F_q$ is $(-2+\varepsilon)$-concave and admits canonical lifts. An obvious modification of the arguments in \autoref{sec: product case} implies theorem A and theorem B in the case that $A$ has no boundary.

\bibliographystyle{alphadin}
\bibliography{Bibliographie}

\end{document}